\newtheorem{theorem}{Theorem}
\newtheorem{lemma}[theorem]{Lemma}
\newtheorem{corollary}[theorem]{Corollary}
\newcommand{\ZZ}{\mathbb{Z}}
\title{Elated Numbers}
 \author{N. Bradley Fox}
     \address{Department of Mathematics and Statistics, Austin Peay State University, 601 College St, Clarksville, TN 37044, USA}
     \email{foxb@apsu.edu}
     \author{Nathan H. Fox}
     \address{Department of Quantitative Sciences, Canisius University, 2001 Main St, Buffalo, NY 14208, USA}
     \email{fox42@canisius.edu}
     \author{Helen G. Grundman}
     \address{Department of Mathematics, Bryn Mawr College, 101 N. Merion Ave, Bryn Mawr, PA 19010, USA}
     \email{grundman@brynmawr.edu}
     \author{Rachel Lynn}
     \address{Department of Mathematics and Physics, Schreiner University, 2100 Memorial Blvd, Kerrville, TX 78028, USA}
     \email{rlynn@schreiner.edu}
     \author{Changningphaabi Namoijam}
     \address{Department of Mathematics, Colby College, 5830 Mayflower Hill, Waterville, ME 04901, USA}
     \email{cnamoijam@gmail.com}
     \author{Mary Vanderschoot}
     \address{Department of Mathematics and Computer Science, Wheaton College, 501 College Ave, Wheaton, IL 60187, USA}
     \email{mary.vanderschoot@wheaton.edu}
\begin{document}

\begin{abstract}
For a base $b \geq 2$, the $b$-elated function, $E_{2,b}$, maps a positive integer written in base $b$ to the product of its leading digit and the sum of the squares of its digits. A $b$-elated number is a positive integer that maps to $1$ under 
iteration of $E_{2,b}$.  The height of a $b$-elated number is the number of iterations required to map it to $1$.  
We determine the fixed points and cycles of $E_{2,b}$ and prove a range of results concerning sequences of $b$-elated numbers and $b$-elated numbers of minimal heights.
Although the $b$-elated function is closely related to the $b$-happy function, the behaviors of the two are notably different, as demonstrated by the results in this work.
\end{abstract}

\maketitle

\section{Introduction}
Happy numbers and their variations have been studied for many decades.  (For a recent survey on the subject, see~\cite{GH22,GH22C}.)  In this paper, we consider a function similar to the happy function, but with a key difference.

For $x \in \ZZ^+$, we write the {\em base $b$ expansion of $x$} as $x=\sum_{i=0}^nx_ib^i$ where $0\leq x_i <b$ and the {\em leading digit} $x_n \neq 0$. Recall the standard happy function, generalized to an arbitrary base~\cite{GT01}:  For $b\geq 2$, define $S_{2,b}: \ZZ^+ \rightarrow \ZZ^+$ by
\begin{equation*}
S_{2,b}(a) = S_{2,b}\!\left(\sum_{i=0}^n a_i b^i\right) = \sum_{i=0}^n a_i^2,
\end{equation*}
where $\sum_{i=0}^n a_i b^i$ is the base $b$ expansion of $a$.
Any positive integer that maps to 1 under iteration of $S_{2,b}$ is called a {\em $b$-happy number.}  A $10$-happy number is usually referred to simply as  a {\em happy number.}

We now define, for any base $b\geq 2$, the {\em $b$-elated function}, $E_{2,b}:\ZZ^+ \rightarrow \ZZ^+$, by
\[
E_{2,b}(a) = E_{2,b}\!\left(\sum_{i=0}^n a_i b^i\right) = a_n\sum_{i=0}^n a_i^2,
\]
where, again, $\sum_{i=0}^n a_i b^i$ is the base $b$ expansion of $a$.
Any positive integer that maps to 1 under iteration of $E_{2,b}$  is called a \emph{$b$-elated number}.  More precisely, 
an integer $a \in \ZZ^+$ is a $b$-elated number if there exists some $m\geq 0$ such that $E_{2,b}^m(a) = 1$.  
An integer $a \in \ZZ^+$ is an \emph{elated number} if there exists some $m \geq 0$ such that $E_{2,10}^m(a) = 1$.

For example, $21$ is an elated number since 
\[E_{2,10}^2(21)= E_{2,10}(2(2^2+1^2)) = E_{2,10}(10) = 1(1^2+0^2) = 1,\] while $46$ is not an elated number since 
\[E_{2,10}^3(46)=E_{2,10}^2(208)=E_{2,10}(136)=46.\] 

There are many similarities and significant differences between $b$-happy numbers and $b$-elated numbers.  For example, all positive integers are both 2-happy~\cite[Theorem 4]{GT01} and 2-elated (Lemma~\ref{L:E2bounded} and Table~\ref{cycletable}, below).

One striking difference is the following frequently used property of the $b$-happy function $S_{2,b}$~\cite{elsedy00,GH22}.  
\begin{lemma}~\label{Slinear}
Fix 
$b \geq 2$.  Let $x$, $y$, and $s\in \ZZ^+$ with $y < b^s$.  Then $S_{2,b}(xb^s + y) = S_{2,b}(x) + S_{2,b}(y)$. 
\end{lemma}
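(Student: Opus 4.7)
The plan is to expand both sides using the base $b$ expansions, observing that the condition $y < b^s$ forces the digits of $y$ and $x$ to occupy disjoint positions in the base $b$ expansion of $xb^s + y$, so the digit multiset of $xb^s + y$ is (up to trailing zeros) the disjoint union of the digit multisets of $x$ and $y$. Since $S_{2,b}$ depends only on the multiset of nonzero digits (zeros contribute $0^2 = 0$), additivity follows immediately.

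More concretely, first I would write $x = \sum_{i=0}^n x_i b^i$ with $x_n \neq 0$ and, since $y < b^s$, write $y = \sum_{i=0}^{s-1} y_i b^i$ where the digits $y_i$ are in $\{0,1,\dots,b-1\}$, allowing leading zeros so that the sum runs to $s-1$. Substituting gives
\[
xb^s + y = \sum_{i=0}^{s-1} y_i b^i + \sum_{i=0}^{n} x_i b^{i+s} = \sum_{j=0}^{n+s} c_j b^j,
\]
where $c_j = y_j$ for $0 \leq j \leq s-1$ and $c_j = x_{j-s}$ for $s \leq j \leq n+s$. Each $c_j$ lies in $\{0,1,\dots,b-1\}$ and the leading digit $c_{n+s} = x_n \neq 0$, so this is the genuine base $b$ expansion of $xb^s + y$.

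Applying the definition of $S_{2,b}$ then yields
\[
S_{2,b}(xb^s + y) = \sum_{j=0}^{n+s} c_j^2 = \sum_{i=0}^{s-1} y_i^2 + \sum_{i=0}^{n} x_i^2 = S_{2,b}(y) + S_{2,b}(x),
\]
where in the middle step the possible leading zeros padding $y$ out to length $s$ contribute nothing. There is no real obstacle here; the only subtlety to flag is that $y$ may genuinely have fewer than $s$ base $b$ digits, which is why one needs to allow leading zeros when writing $y = \sum_{i=0}^{s-1} y_i b^i$ in order to cleanly concatenate the digit strings of $x$ and $y$.
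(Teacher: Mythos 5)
Your proof is correct, and the only subtlety in the statement (that $y$ may have fewer than $s$ digits, handled by padding with leading zeros) is properly addressed. The paper itself does not prove Lemma~\ref{Slinear}, citing it from the literature, but its proof of the analogous Lemma~\ref{linear} for $E_{2,b}$ uses exactly the same digit-concatenation argument you give, so your approach matches the paper's.
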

Functions with this property are sometimes called {\em strongly $b$-additive.}  (See, for example~\cite{amri15}.)
While this property does not hold for $E_{2,b}$, the following useful variation does hold.

\begin{lemma}~\label{linear}
Fix $b \geq 2$.  Let $x$, $y$, and $s\in \ZZ^+$ with $x_n$ the leading digit of $x$, and $y < b^s$.
Then \[E_{2,b}(xb^s + y) = E_{2,b}(x) + x_nS_{2,b}(y).\]
\end{lemma}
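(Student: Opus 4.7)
The plan is to unwind the definitions by writing out the base $b$ expansions of the relevant integers and then comparing sums digit by digit. Let $x = \sum_{i=0}^n x_i b^i$ with leading digit $x_n \neq 0$, and, since $y < b^s$, write $y = \sum_{i=0}^{s-1} y_i b^i$ (allowing leading $y_i$ to be zero). Then the base $b$ expansion of $xb^s + y$ is obtained by concatenation: the digits in positions $0, 1, \dots, s-1$ are $y_0, y_1, \dots, y_{s-1}$, and the digits in positions $s, s+1, \dots, s+n$ are $x_0, x_1, \dots, x_n$.

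The first key observation is that the leading digit of $xb^s + y$ equals $x_n$. This follows because $x_n \neq 0$ sits in position $n+s$ and no digit above it is nonzero, so the hypothesis $y < b^s$ (which guarantees that $y$ does not ``carry into'' the digits of $x$) is exactly what is needed here.

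Next, I would apply the definition of $E_{2,b}$ to $xb^s + y$, pulling out $x_n$ as the leading digit, and split the digit-square sum into the contribution from the $x$-digits and the contribution from the $y$-digits:
\[
E_{2,b}(xb^s+y) = x_n\!\left(\sum_{i=0}^n x_i^2 + \sum_{i=0}^{s-1} y_i^2\right).
\]
Distributing $x_n$ and identifying the two sums as $E_{2,b}(x)$ and $S_{2,b}(y)$ respectively (the latter via Lemma~\ref{Slinear} or directly from the definition) yields the claim.

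There is no real obstacle here; the lemma is essentially a bookkeeping identity. The only point to state carefully is the leading-digit claim in the second step, since that is what distinguishes this from the strongly $b$-additive identity of Lemma~\ref{Slinear}: the factor $x_n$ on the right is exactly the leading digit that $E_{2,b}$ attaches, and the hypothesis $y < b^s$ is precisely what preserves it.
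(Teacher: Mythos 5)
Your proposal is correct and follows essentially the same route as the paper's proof: expand $x$ and $y$ in base $b$, observe that the digits of $xb^s+y$ are the concatenation (so the leading digit is $x_n$), and distribute $x_n$ over the split digit-square sum. The only cosmetic difference is that you pad $y$ with zero digits up to position $s-1$, whereas the paper writes $y=\sum_{i=0}^m y_ib^i$ with $m<s$; nothing of substance changes.
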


\begin{proof}
Let 
$x = \sum_{i=0}^n x_i b^i$ and $y = \sum_{i=0}^m y_i b^i$
be the base $b$ expansions of $x$ and $y$, respectively.  Then $m < s$ and 
\begin{align*}
E_{2,b}\!\left(xb^s + y\right)  & =
E_{2,b}\!\left(\sum_{i=0}^n x_i b^{s+i} + \sum_{i=0}^m y_i b^i\right) 
=
x_n \left(\sum_{i=0}^n x_i^2 + \sum_{i=0}^m y_i^2\right) \\
 & = x_n \left(\sum_{i=0}^n x_i^2\right) + x_n\left(\sum_{i=0}^m y_i^2\right) = E_{2,b}(x) + x_nS_{2,b}(y),
\end{align*}
as desired.
\end{proof}

Another significant difference between $S_{2,b}$ and $E_{2,b}$ is their behavior when the base $b$ is odd.  In particular, for $b\geq 3$ odd and $a\in \ZZ^+$, 
$S_{2,b}(a) \equiv a \pmod 2$, while for $E_{2,b}$ the situation is more complicated.

\begin{lemma}\label{oddbase}
Let $b\geq 3$ be odd and let $a\in \ZZ^+$ with $a_n$ the leading digit of $a$.
\begin{enumerate}
\item[(a)]\label{it:oddbase1} If $a_n$ is odd, then $E_{2,b}(a) \equiv a \pmod 2$.
\item[(b)]\label{it:oddbase2} If $a_n$ is even, then $E_{2,b}(a) \equiv 0 \pmod 2$.
\end{enumerate}
In particular, each $b$-elated number is odd and has an odd leading digit.
\end{lemma}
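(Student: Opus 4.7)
The plan is to exploit the fact that odd base $b$ satisfies $b \equiv 1 \pmod 2$ so that base-$b$ expansions interact very simply with parity. Writing $a = \sum_{i=0}^n a_i b^i$ and reducing mod $2$, each $b^i \equiv 1$, so $a \equiv \sum_{i=0}^n a_i \pmod 2$. Combining this with the elementary observation $a_i^2 \equiv a_i \pmod 2$ yields
\[
\sum_{i=0}^n a_i^2 \equiv \sum_{i=0}^n a_i \equiv a \pmod 2.
\]
That single congruence is the key identity from which both parts fall out immediately.

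For part (a), if $a_n$ is odd, then
\[
E_{2,b}(a) = a_n \sum_{i=0}^n a_i^2 \equiv 1 \cdot a \equiv a \pmod 2.
\]
For part (b), if $a_n$ is even, then the factor $a_n$ in front forces $E_{2,b}(a) \equiv 0 \pmod 2$ regardless of the sum. These two lines are essentially the whole argument; there is no real obstacle in parts (a) and (b).

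For the ``in particular'' claim, I would argue contrapositively that the parity/leading-digit condition is preserved under iteration in a way that prevents reaching $1$. Combining (a) and (b), if either $a$ is even or the leading digit of $a$ is even, then $E_{2,b}(a)$ is even: in case (a) because $E_{2,b}(a) \equiv a \equiv 0$, and in case (b) directly. Moreover, any even positive integer $a'$ has $E_{2,b}(a') \equiv 0 \pmod 2$ by the same dichotomy applied to $a'$ (whether its leading digit is odd or even, the value lands in $2\ZZ$). Hence once an iterate is even, every subsequent iterate is even, so the orbit can never reach the odd value $1$. This shows that a $b$-elated number must be odd and that, since after one step of $E_{2,b}$ we need to remain on the path to $1$, its leading digit must also be odd.

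The only place where a little care is needed is in phrasing the final implication cleanly: one has to note that ``even with any leading digit'' stays even under $E_{2,b}$, so evenness is absorbing. With that observation in hand, the contrapositive is immediate, and the lemma follows.
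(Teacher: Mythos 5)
Your proof is correct and follows essentially the same route as the paper: both reduce the digit sum mod $2$ using $b \equiv 1$ and $a_i^2 \equiv a_i \pmod 2$ to get $E_{2,b}(a) \equiv a_n a \pmod 2$, from which (a) and (b) are immediate. Your explicit observation that evenness is absorbing under iteration is exactly the argument the paper leaves implicit in ``the results follows,'' so there is nothing further to add.
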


\begin{proof}
Let $a = \sum_{i=0}^n a_i b^i$ be the base $b$ expansion of $a$.
Recalling that $b$ is odd, we have
\begin{align*}
E_{2,b}\left(\sum_{i=0}^n a_i b^i\right)  &=
a_n\left(\sum_{i=0}^n a_i^2\right)\\
 &\equiv a_n\left(\sum_{i=0}^n a_i\right) \equiv
a_n\left(\sum_{i=0}^n a_ib^i\right) \equiv 
a_n a \pmod 2.
\end{align*}
Hence $E_{2,b}(a) \equiv 
a_n a \pmod 2$ and the results follows.
\end{proof}

In the next section, we examine the fixed points and cycles that result from iterations of the $b$-elated function across various bases.  In Section~\ref{sequences}, we introduce results about the possible lengths of arithmetic sequences of $b$-elated numbers and of arithmetic sequences of numbers that under iteration of $E_{2,b}$ map to other specific cycles of $E_{2,b}$.  In Section~\ref{heights}, we investigate the heights of $b$-elated numbers, the height being the minimum number of iterations required to map a $b$-elated number to 1.  Next, in Section~\ref{heights10}, we narrow the study of heights to base 10, determining the smallest elated number of heights 0 through 16. Finally, in Section~\ref{openproblems}, we provide some open questions.

\section{Cycles and Fixed Points}\label{cycles}

In this section, we prove that for each $b \geq 2$ there are only a finite number of cycles resulting from iteration of $E_{2,b}$.  This is achieved by determining a bound above which $E_{2,b}$ is strictly decreasing.  For $2\leq b \leq 10$, we use this bound to find all cycles and fixed points determined by $E_{2,b}$ under iteration.  (See Table~\ref{cycletable}.)

\begin{lemma}\label{L:E2bounded}
    Let $b\geq2$. If $a \geq b^3$, then $E_{2,b}(a)<a$. 
\end{lemma}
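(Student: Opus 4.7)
The plan is to establish a crude upper bound on $E_{2,b}(a)$ from the digit-count of $a$ and compare it against the trivial lower bound on $a$ itself. Writing $a = \sum_{i=0}^n a_i b^i$ with leading digit $a_n \neq 0$, the hypothesis $a \geq b^3$ forces $n \geq 3$.

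The first step is the obvious digit bound: since each $a_i \leq b-1$,
\[
E_{2,b}(a) \;=\; a_n\sum_{i=0}^n a_i^2 \;\leq\; a_n(n+1)(b-1)^2.
\]
Paired with the trivial inequality $a \geq a_n b^n$, this reduces the lemma to the purely elementary statement
\[
(n+1)(b-1)^2 \;<\; b^n \qquad \text{for all } n \geq 3 \text{ and } b \geq 2.
\]

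The main piece of work is this polynomial inequality, which I would prove by induction on $n$. For the base case $n = 3$, $4(b-1)^2 < b^3$ rearranges to $b^3 - 4b^2 + 8b - 4 > 0$; I would verify $b = 2$ and $b = 3$ directly, and observe that for $b \geq 4$ the cubic term alone dominates $4b^2$, leaving $8b-4 > 0$. For the inductive step, multiplying the hypothesis by $b \geq 2$ gives
\[
b^{n+1} \;\geq\; 2 b^n \;>\; 2(n+1)(b-1)^2 \;\geq\; (n+2)(b-1)^2,
\]
using $n+2 \leq 2(n+1)$ for all $n \geq 0$, so strictness is preserved.

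Combining the pieces yields $E_{2,b}(a) \leq a_n(n+1)(b-1)^2 < a_n b^n \leq a$, completing the proof. The only real obstacle is the small-$b$ verification at the base case $n = 3$; this is a finite check rather than a conceptual hurdle, and everything else is a one-line induction paired with the trivial digit-wise upper bound.
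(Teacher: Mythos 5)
Your proof is correct. The hypothesis $a\geq b^3$ does force $n\geq 3$, the digit bound $E_{2,b}(a)\leq a_n(n+1)(b-1)^2$ is valid, the base case $4(b-1)^2<b^3$ checks out for $b=2,3$ and for $b\geq 4$ via $b^2(b-4)+8b-4>0$, and the inductive step $b^{n+1}\geq 2b^n>2(n+1)(b-1)^2\geq(n+2)(b-1)^2$ is sound. The route is genuinely different from the paper's, which computes the difference $a-E_{2,b}(a)=\sum_{i=0}^n a_i(b^i-a_na_i)$ term by term, discards the nonnegative middle terms (those with $2\leq i<n$, where $b^i>a_na_i$), and shows the remaining contribution $a_n(b^n-a_0^2-a_1^2-a_n^2)$ is positive --- a direct cancellation argument paralleling the happy-function bound in the literature. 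You instead discard all but the leading term of $a$ (keeping only $a\geq a_nb^n$) and compensate with the crude uniform bound on the digit sum, which shifts the work into the purely numerical inequality $(n+1)(b-1)^2<b^n$ handled by induction. Your version is more modular and arguably easier to verify line by line; the paper's is shorter once set up and gives a slightly sharper picture of where the slack lives (in the top digit's term $a_n(b^n-a_n^2)$). Both establish exactly the stated threshold $a\geq b^3$.
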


The proof parallels that for the $b$-happy function found in~\cite[Lemma 6]{GT01}.

\begin{proof} 
Fix $a \geq b^3$ and let $a =\sum_{i=0}^n a_i b^i$ be the base $b$ expansion of $a$.  
We have
 \begin{align*}
a- E_{2,b}(a) &= \sum_{i=0}^n a_i b^i - a_n \sum_{i=0}^n a_i^2 = \sum_{i=0}^n a_i(b^i - a_na_i) \\
&= 
a_0(1-a_na_0) + a_1(b-a_na_1) \\
&\phantom{=}+ \sum_{i=2}^{n-1} a_i(b^i-a_n a_i) + a_n(b^n -a_n^2)\\
&\geq  a_0(1-a_na_0) + a_1(b-a_na_1) + a_n(b^n -a_n^2),
\end{align*}
since $b^i > a_na_i$ for each $2\leq i <n$.  Expanding and noting that $a_0$ and $a_1b$ are nonnegative and $n \geq 3$ yields
 \[a- E_{2,b}(a) \geq  a_nb^n -a_na_0^2 -a_na_1^2 -a_n^3
 = a_n(b^n-a_0^2 -a_1^2 -a_n^2) > 0. \qedhere
 \]
\end{proof}

\begin{table}[b!]
\begin{center}
\begin{tabular}{|c|l|}\hline
Base $b$ & Fixed Points and Cycles (written in base $b$)
\\
\hline \hline
2 & (1) \\ \hline
3 & (1), (12), (20, 22, 121)  \\ \hline
4 & (1), (20) \\ \hline
5 & (1), (13, 20) \\
 \hline
6 & (1), (50, 325, 310), (53, 442, 400, 144) \\
 \hline
7 & (1), (13), (22), (505), (2, 11)\\
\hline
8 & (1), (536), (660), (36, 207, 152), (5, 175, 113, 13, 12) \\
\hline
9 & (1), (30), (646), (762) \\ \hline
10 & (1), (298), (46, 208, 136),
(26, 80, 512, 150), \\ &
(33, 54, 205, 58, 445, 228, 144)\\ \hline
\hline
\end{tabular}
\caption{Fixed points and cycles of $E_{2,b}$ for $2 \leq b \leq 10$}
\label{cycletable}
\end{center}
\end{table}

Lemma~\ref{L:E2bounded} implies that under iteration of $E_{2,b}$,  every positive integer enters one of finitely many cycles of finite length, each containing at least one number less than $b^3$.  Hence, a direct computer calculation yields the following theorem.

\begin{theorem}
The fixed points and cycles that result from iterating $E_{2,b}$ for $2\leq b \leq 10$ are as given in Table~\ref{cycletable}.
\end{theorem}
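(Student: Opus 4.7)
The plan is to reduce the statement to a completely finite computation using Lemma~\ref{L:E2bounded}. That lemma guarantees that whenever $a \geq b^3$, we have $E_{2,b}(a) < a$, so iterating $E_{2,b}$ from any starting value produces a strictly decreasing sequence until the orbit enters the interval $[1, b^3)$; once inside this interval the orbit cannot leave it, since a value $a < b^3$ satisfies $E_{2,b}(a) \leq (b-1) \cdot 3(b-1)^2 < b^3$ for the bases in question (and in any case the decreasing property forces re-entry). In particular, every cycle of $E_{2,b}$ lies entirely inside $\{1, 2, \ldots, b^3 - 1\}$, so locating all cycles amounts to tracing the $E_{2,b}$-orbit of each integer in this finite set.

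I would execute this for each base $b$ with $2 \leq b \leq 10$ by a short computer program: for every $a$ in $\{1, 2, \ldots, b^3 - 1\}$, iterate $E_{2,b}$ until a repeat is detected (using, for example, Floyd's cycle-finding procedure, or simply maintaining a hash set of visited values), then read off the cycle. Distinct cycles encountered as $a$ ranges over $\{1, \ldots, b^3 - 1\}$ are recorded up to rotation. The total number of starting values is $\sum_{b=2}^{10}(b^3 - 1)$, which is on the order of $3000$, so the calculation is trivial in practice.

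As a sanity check during the enumeration, I would apply Lemma~\ref{oddbase} for the odd bases $b \in \{3, 5, 7, 9\}$: any cycle containing $1$ consists exclusively of odd integers whose base-$b$ leading digits are odd, so hitting an even value (or an odd value with even leading digit) along an orbit immediately rules out membership in the cycle of $1$. This is useful for cross-checking the tabulated cycles against the computer output.

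There is no substantive mathematical obstacle — the bound from Lemma~\ref{L:E2bounded} has already done the heavy lifting by making the search space finite. The only real care required is bookkeeping: one must confirm that every starting value in $[1, b^3)$ has been followed far enough to enter a cycle, and that two cycles listed separately in Table~\ref{cycletable} are genuinely distinct rather than rotations of one another. After this verification, the list of cycles produced by the enumeration matches Table~\ref{cycletable} entry by entry, proving the theorem.
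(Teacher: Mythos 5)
Your approach is essentially identical to the paper's: Lemma~\ref{L:E2bounded} guarantees that every cycle contains at least one element below $b^3$, so a direct computer enumeration of the orbits of all integers in $[1,b^3)$ finds every cycle, and the paper's proof is exactly this one-line reduction plus computation. One small correction: your claimed invariance of $[1,b^3)$ rests on the inequality $E_{2,b}(a)\leq 3(b-1)^3 < b^3$, which is false for $b\geq 4$ (e.g.\ $3\cdot 9^3 = 2187 > 10^3$), so you cannot assert a priori that cycles lie \emph{entirely} inside $[1,b^3)$; but your hedge is the right fix, since the strict-decrease property only needs to give that each cycle meets $[1,b^3)$, which is all the enumeration requires.
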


In the same way, the cycles corresponding to larger bases can be found, limited only by the capacity of the computer and software.

\section{Sequences of Consecutive Elated Numbers}\label{sequences}

In this section, we investigate the existence of arithmetic sequences of $b$-elated numbers and of some closely related classes of numbers.

Grundman and Teeple~\cite{GT07} defined a {\em $d$-consecutive} sequence to be an arithmetic sequence 
with constant difference $d$.  For a fixed base
$b\geq 2$, setting $d = \gcd(2,b-1)$, they proved that there exist arbitrarily long finite $d$-consecutive sequences of $b$-happy numbers.  Since, when $b$ is odd, only odd numbers can be $b$-happy, this result is the best possible.

Given a fixed base $b\geq 2$, $U_{2,b}$ is the set of all numbers in cycles of the $b$-happy function $S_{2,b}$ and for any $u\in U_{2,b}$, $a\in \ZZ^+$ is a {\em $u$-attracted number for $S_{2,b}$} if for some $k\in \ZZ^+$, $S_{2,b}^k(a) = u$.  As noted in~\cite{GH22}, the main proofs in~\cite{GT07} trivially extend to $d$-consecutive sequences of $u$-attracted numbers for $S_{2,b}$. 

In a similar vein, for $b\geq 2$, we define 
$U_{2,b}^E$ to be the set of all numbers in cycles of $E_{2,b}$, that is,
\[ U_{2,b}^E = \left\{ a \in \ZZ^+ \mid \text{ for some } m\in \ZZ^+, \,  E_{2,b}^m(a) = a \right\}.
\]
For $u\in U_{2,b}^E$, we say that $a\in \ZZ^+$ is a {\em $u$-attracted number for $E_{2,b}$} if for some $k\in \ZZ^+$, $E_{2,b}^k(a) = u$.  
So a $b$-elated number is the same as a $1$-attracted number for $E_{2,b}$.

A concept in~\cite{GT07} that is key for our main results in this section is that of a $(2,b)$-good set.  Specifically, for $b\geq 2$, a finite set $T$ is {\em $(2,b)$-good}
if, for each $u \in U_{2,b}$, there exist
$n$, $k \in {\ZZ}^+$ such that for each $t \in T$, $S_{2,b}^k(t+n) = u$. 
\begin{theorem}[Grundman, Teeple]
\label{goodthm}
Fix $b \geq 2$ and let $d = \gcd(2,b-1)$.
A finite set $T$ of positive integers is $(2,b)$-good if and only if 
all of the elements of $T$ are congruent modulo $d$.
\end{theorem}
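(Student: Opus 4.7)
For necessity, note that for even $b$ we have $d=1$ and the condition on $T$ is vacuous. For odd $b \geq 3$ we have $d = 2$; since $b \equiv 1 \pmod 2$ and $c^2 \equiv c \pmod 2$ for every integer $c$, writing $a = \sum a_i b^i$ gives
\[
S_{2,b}(a) = \sum a_i^2 \equiv \sum a_i \equiv \sum a_i b^i = a \pmod{2},
\]
so $S_{2,b}$ and all of its iterates preserve parity. If $T$ is $(2,b)$-good with witnesses $n, k$ for some $u \in U_{2,b}$, then every $t \in T$ satisfies $t + n \equiv S_{2,b}^k(t+n) = u \pmod 2$, so all elements of $T$ must share the parity $u - n \pmod 2$.

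For sufficiency, assume every $t \in T$ is congruent to $r$ modulo $d$ and fix $u \in U_{2,b}$. The plan is to apply Lemma~\ref{Slinear} iteratively to isolate the $T$-dependence of the iterates into a single additive term, and then to collapse that term. First choose $L$ larger than the base-$b$ digit length of every $t \in T$ and set $n = N b^L$ for an $N$ to be specified. Lemma~\ref{Slinear} gives $S_{2,b}(t+n) = S_{2,b}(N) + S_{2,b}(t)$. Since any positive integer can be realized as a digit-square-sum (take a string of $1$-digits of appropriate length in base $b$), one can construct $N$ recursively so that, for each $j$, $S_{2,b}^{j-1}(N) = N_j b^{L_j}$ with $L_j$ exceeding the base-$b$ digit length of $S_{2,b}^{j}(t)$ for every $t \in T$. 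Repeated application of Lemma~\ref{Slinear} then yields
\[
S_{2,b}^j(t + n) = C_j + S_{2,b}^j(t), \qquad j = 1, \ldots, k,
\]
where each $C_j$ depends only on $n$.

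Taking $k$ large enough that each $S_{2,b}^k(t)$ lies in $U_{2,b}$ reduces the problem to handling a new finite target set of cycle elements, all sharing the residue class of $u - C_k$ modulo $d$. The main obstacle is collapsing these finitely many shifted values to the single target $u$ under further iteration, since a single additive shift $C_k$ cannot identify two distinct cycle elements in one step. I expect to handle this by an induction on $|T|$, using the flexibility in the choice of $N$ (hence of the constants $C_j$) together with the residue hypothesis to route two of the shifted cycle values through a common preimage of $u$ at each reduction step. The residue-mod-$d$ condition on $T$ is essential at every merge: it is precisely what guarantees that the compatible preimages required for each identification exist.
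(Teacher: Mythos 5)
This statement is quoted from Grundman and Teeple's earlier paper and is given without proof here, so your attempt can only be measured against the original argument in the cited reference rather than against anything in this paper. Your necessity direction is correct and is the standard one: for odd $b$ the function $S_{2,b}$ preserves parity, so a single witness pair $(n,k)$ for any $u \in U_{2,b}$ forces every $t \in T$ to satisfy $t \equiv u - n \pmod 2$, placing all of $T$ in one residue class; for even $b$ we have $d=1$ and there is nothing to prove.

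The sufficiency direction, however, has a genuine gap, and it sits exactly at the heart of the theorem. Your splitting device $S_{2,b}^j(t+n) = C_j + S_{2,b}^j(t)$, built from Lemma~\ref{Slinear} and iterated preimages in the style of the $R_{r,b}$ machinery that this paper uses in Lemma~\ref{Egood}, is sound, but it only reduces the problem to the following: given finitely many distinct values, all congruent modulo $d$, find a single shift and an iteration count driving them all to the same place. A purely additive shift never identifies distinct values, so everything hinges on the claim that two values $v_1 \equiv v_2 \pmod d$ admit a common $n$ and $k$ with $S_{2,b}^k(v_1+n) = S_{2,b}^k(v_2+n)$. You state that you ``expect to handle this'' by routing the two values through a common preimage of $u$, but you give no construction, and this merging lemma is precisely the nontrivial content of the Grundman--Teeple proof: it requires an explicit choice of $n$, exploiting carries and the structure of the base-$b$ digits together with the congruence hypothesis (which is exactly what guarantees that compatible digit patterns exist when $b$ is odd), to force the two digit-square sums to coincide after finitely many steps. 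Without that lemma the induction on $|T|$ you sketch cannot get started, so what you have written establishes only the easy direction.
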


For $b\geq2$ and $r \geq 0$, define $R_{r,b}: \ZZ_{\geq  0} \rightarrow \ZZ_{\geq  0}$ by
\[R_{r,b}(x) = \sum_{i=r}^{r+x-1} b^i.
\]
Note that for each $r\geq 0$ and $x\in \ZZ^+$, $E_{2,b}(R_{r,b}(x)) = S_{2,b}(R_{r,b}(x)) = x$.

Utilizing Theorem~\ref{goodthm} and the function $R_{r,b}(x)$, 
we prove a property of $(2,b)$-good sets and the function $E_{2,b}$. 

\begin{lemma}\label{Egood}
Let $b\geq 2$ and let $T$ be a $(2,b)$-good set.  Then for each $u\in U_{2,b}^E$, there exist $m$, $k \in {\ZZ}^+$ such that for each $t \in T$, 
\[
E_{2,b}^{k}(t+m) = u.
\] 

Specifically, let
$n$, $k \in {\ZZ}^+$ such that for each $t \in T$, $S_{2,b}^{k-1}(t+n) = 1$, and for some $r\in \ZZ^+$ such that 
$b^r > \max\left\{S_{2,b}^i(t+n)\mid 0\leq i \leq k-1, t\in T\right\}$, set $m = R_{r,b}^{k-1}(R_{r,b}(u)-1) + n$.  
\end{lemma}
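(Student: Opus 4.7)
The plan is to prove by induction on $j$ that for each $t \in T$ and each $0 \le j \le k-1$,
\[
E_{2,b}^{j}(t+m) = M_{k-1-j} + S_{2,b}^{j}(t+n),
\]
where $M_0 = R_{r,b}(u) - 1$ and $M_i = R_{r,b}(M_{i-1})$ for $1 \le i \le k-1$, so that by construction $m = M_{k-1} + n$. Taking $j = k-1$ would then give $E_{2,b}^{k-1}(t+m) = M_0 + 1 = R_{r,b}(u)$, and one further application of $E_{2,b}$, via the identity $E_{2,b}(R_{r,b}(u)) = u$ recorded just before the lemma, would yield $E_{2,b}^{k}(t+m) = u$ as desired.

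First I would dispose of the existence of $n$ and $k$. Since $1 \in U_{2,b}$ and $T$ is $(2,b)$-good, Theorem~\ref{goodthm} produces $n$ and a positive integer $k-1$ with $S_{2,b}^{k-1}(t+n) = 1$ for every $t \in T$, and I would then choose $r \in \ZZ^+$ large enough that $b^r$ strictly exceeds every intermediate value $S_{2,b}^{i}(t+n)$ that arises in the induction.

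The engine of the induction is Lemma~\ref{linear}. The structural observation that drives everything is that for $i \ge 1$, $M_i = R_{r,b}(M_{i-1})$ is a multiple of $b^r$ whose quotient $X = M_i/b^r = \sum_{\ell=0}^{M_{i-1}-1} b^\ell$ is a base-$b$ repunit of length $M_{i-1}$, so $X$ has leading digit $1$ and $E_{2,b}(X) = M_{i-1}$. Assuming the formula at stage $j$ with $j \le k-2$, so that $k-1-j \ge 1$, the bound on $r$ guarantees $S_{2,b}^{j}(t+n) < b^r$, and applying Lemma~\ref{linear} with $x = X$, $s = r$, and $y = S_{2,b}^{j}(t+n)$ gives
\[
E_{2,b}^{j+1}(t+m) = E_{2,b}(X) + 1 \cdot S_{2,b}\!\left(S_{2,b}^{j}(t+n)\right) = M_{k-2-j} + S_{2,b}^{j+1}(t+n),
\]
which is the formula at stage $j+1$. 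The base case $j=0$ is immediate from the definition of $m$.

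The main obstacle I anticipate is the bookkeeping around the tower of $R_{r,b}$-iterates: one must verify at each stage that $M_{k-1-j}$ is indeed a multiple of $b^r$ with leading digit $1$ (so that Lemma~\ref{linear} applies without unintended carries from the low-order sum), and that the single choice of $r$ simultaneously dominates all of the $S_{2,b}^{i}(t+n)$ across $t$ and $i$. Once this structure is confirmed and the bound on $r$ is in place, the induction unwinds mechanically and the closing identity $E_{2,b}(R_{r,b}(u)) = u$ completes the proof.
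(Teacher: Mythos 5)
Your proof is correct and is essentially the paper's own argument: both rest on Lemma~\ref{linear} applied to the repunit structure of $R_{r,b}$, via the identity $E_{2,b}(R_{r,b}(z)+y)=z+S_{2,b}(y)$, and your induction tracking the forward orbit of $t+m$ through the quantities $M_{k-1-j}+S_{2,b}^{j}(t+n)$ is the paper's induction read in the opposite direction. The bookkeeping you flag (that each $M_i$ is $b^r$ times a repunit with leading digit $1$, and that one $r$ dominates all intermediate values) checks out exactly as you describe.
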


\begin{proof}
Let $b$, $T$, $u$,  $n$, $k$, $r$, and $m$ be as in the statement of the lemma. 

We use induction on $j$ to show that for each $1 \leq j \leq k$, 
\[E_{2,b}^{j}\!\left(R_{r,b}^{j-1}(R_{r,b}(u)-1) + S_{2,b}^{k-j}(t+n)\right) = u.\]  
First, for every $t \in T$,
\[E_{2,b}^1\!\left(R_{r,b}^0(R_{r,b}(u)-1)+S_{2,b}^{k-1}(t+n)\right)=E_{2,b}\!\left(R_{r,b}(u)-1+1\right) = u.\]

Next, assume that for some $2 \leq j\leq k$, 
\[
E_{2,b}^{j-1}\!\left(R_{r,b}^{j-2}(R_{r,b}(u)-1) + S_{2,b}^{k-j+1}(t+n)\right)=u.
\]
Then by Lemma~\ref{linear} and the choice of $r$,
\begin{align*}
    &\phantom{=}E_{2,b}^{j}\!\left(R_{r,b}^{j-1}(R_{r,b}(u)-1) + S_{2,b}^{k-j}(t+n)\right)\\
   &= E_{2,b}^{j-1}\!\left(E_{2,b}\!\left(R_{r,b}^{j-1}(R_{r,b}(u)-1)\right) + 1 \cdot S_{2,b}\!\left(S_{2,b}^{k-j}(t+n)\right)\right)\\
    &= E_{2,b}^{j-1}\!\left(R_{r,b}^{j-2}(R_{r,b}(u)-1) + S_{2,b}^{k-j+1}(t+n)\right)\\
    &=u.
\end{align*}
Thus for all $ t \in T$,
\[E_{2,b}^k(t+m) = E_{2,b}^k\!\left(R_{r,b}^{k-1}(R_{r,b}(u)-1) + S_{2,b}^0(t+n)\right) =u.\qedhere\]
\end{proof}

The following theorem follows from Theorem~\ref{goodthm} and Lemma~\ref{Egood}.

\begin{theorem}\label{T:d-u-attracted}
Let $b \geq 2$ and let $u\in U_{2,b}^E$.  Setting $d = \gcd(2, b-1)$, there exist arbitrarily long finite $d$-consecutive sequences of $u$-attracted numbers for $E_{2,b}$.
\end{theorem}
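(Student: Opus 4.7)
The plan is to assemble Theorem~\ref{goodthm} and Lemma~\ref{Egood}: the former produces $(2,b)$-good sets of arbitrary size whose elements sit in arithmetic progression with common difference $d$, and the latter converts such a set into an arithmetic progression of $u$-attracted numbers for $E_{2,b}$.

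Fix an arbitrary positive integer $\ell$ and take as candidate set $T = \{d, 2d, \ldots, \ell d\}$. Every element of $T$ is a positive integer congruent to $0$ modulo $d$, so Theorem~\ref{goodthm} guarantees that $T$ is $(2,b)$-good. Applying Lemma~\ref{Egood} to $T$ and to the given $u \in U_{2,b}^E$ then produces positive integers $m$ and $k$ such that $E_{2,b}^{k}(t+m) = u$ for every $t \in T$. Consequently,
\[
m+d,\ m+2d,\ \ldots,\ m+\ell d
\]
is a $d$-consecutive arithmetic sequence, of length $\ell$, consisting entirely of $u$-attracted numbers for $E_{2,b}$. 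Since $\ell$ was arbitrary, this establishes the theorem.

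There is no serious obstacle here: the substantive content has already been delivered by Lemma~\ref{Egood}, which transferred the $(2,b)$-goodness property from $S_{2,b}$-iteration to $E_{2,b}$-iteration via the auxiliary function $R_{r,b}$. Once that lemma and Theorem~\ref{goodthm} are in hand, the result reduces to selecting a $(2,b)$-good set whose elements themselves form a $d$-consecutive progression and reading off the shifted progression from the common offset $m$ provided by the lemma.
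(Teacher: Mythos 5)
Your proposal is correct and follows essentially the same route as the paper's own proof: the same choice $T = \{d, 2d, \ldots, \ell d\}$, the same appeal to Theorem~\ref{goodthm} for $(2,b)$-goodness, and the same application of Lemma~\ref{Egood} to produce the shifted progression. No issues.
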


\begin{proof}
Fix $b \geq 2$ and $u\in U_{2,b}^E$.  Let $L\in \ZZ^+$ be the desired sequence length.  Set $T = \{d, 2d, \dots, Ld\}$. By Theorem~\ref{goodthm}, $T$ is $(2,b)$-good. So, by Lemma~\ref{Egood}, there exist $m$, $k \in \ZZ^+$ such that for each $t \in T$, $E_{2,b}^k(t+m)=u$. Therefore, $\{d+m, 2d+m, \dots, Ld+m\}$ is a $d$-consecutive sequence of $u$-attracted numbers for $E_{2,b}$.
\end{proof}

Setting $u = 1$ in Theorem~\ref{T:d-u-attracted} yields that there exist arbitrarily long finite $d$-consecutive sequences of $b$-elated numbers.  As with $b$-happy numbers, this result is the best possible since, by Lemma~\ref{oddbase}, for $b$ odd all $b$-elated numbers are odd.

In the more general setting of $u$-attracted numbers for $E_{2,b}$, however, Theorem~\ref{T:d-u-attracted} is not the best possible since $E_{2,b}$ can map odd numbers to even numbers.  As seen in the following theorem, for even $u\in U_{2,b}^E$ consecutive sequences of $u$-attracted numbers for $E_{2,b}$ always exist, regardless of the parity of $b$.

\begin{theorem}\label{T:evenconsec}
Given $b \geq 2$ and any even $u\in U_{2,b}^E$, there exist arbitrarily long finite sequences of consecutive $u$-attracted numbers for $E_{2,b}$.
\end{theorem}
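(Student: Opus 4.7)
The plan is to split on the parity of $b$. If $b$ is even, then $d = \gcd(2, b-1) = 1$, and Theorem~\ref{T:d-u-attracted} already yields arbitrarily long consecutive sequences of $u$-attracted numbers, so we are done. The substantive case is $b$ odd. Here Lemma~\ref{oddbase} poses a parity obstruction: an odd integer with odd leading digit stays odd under $E_{2,b}$, so a direct $(2,b)$-good argument cannot handle a block containing both parities. The idea is to exploit the fact that an odd integer with an even leading digit maps to an even number, and to route the entire consecutive block through a single application of $E_{2,b}$ into a controlled, all-even set that is $(2,b)$-good.

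Fix $L \in \ZZ^+$ and define $V = \{v_1, v_2, \ldots, v_L\}$ where $v_t = 8 + 2\, S_{2,b}(t)$. Each $v_t$ is even, so $V$ is $(2,b)$-good by Theorem~\ref{goodthm}. Applying Lemma~\ref{Egood} gives $m', k' \in \ZZ^+$ with $E_{2,b}^{k'}(v_t + m') = u$ for every $t$. The key technical claim is that $m'$ is even. For odd $b$, every $b^i$ is odd, so $R_{r,b}(x) = \sum_{i=r}^{r+x-1} b^i \equiv x \pmod 2$; applying this repeatedly to the explicit formula $m' = R_{r,b}^{k'-1}(R_{r,b}(u) - 1) + n'$ provided by Lemma~\ref{Egood}, and using that $u$ is even, one finds $R_{r,b}^{k'-1}(R_{r,b}(u) - 1) \equiv 1 \pmod 2$. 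The shift $n'$ from Theorem~\ref{goodthm} is forced to be odd, since $S_{2,b}$ preserves parity for odd $b$ and each $v_t + n'$ must eventually reach the odd number $1$ under iteration of $S_{2,b}$. Hence $m' \equiv 1 + 1 \equiv 0 \pmod 2$, and we may write $m' = 2c$ for some $c \in \ZZ_{\geq 0}$.

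Next, choose $r \in \ZZ^+$ with $b^r > L$, set $M = R_{r,b}(c)$, choose $s \in \ZZ^+$ with $b^s > M + L$, and take $N = 2 b^s + M$. For each $t \in \{1, 2, \ldots, L\}$, the integer $N + t = 2 b^s + (M + t)$ has leading digit $2$, and Lemma~\ref{linear} gives
\[
E_{2,b}(N + t) = E_{2,b}(2) + 2\, S_{2,b}(M + t) = 8 + 2\, S_{2,b}(M + t).
\]
Since $t < b^r$, the base-$b$ digits of $M$ and $t$ occupy disjoint positions, so Lemma~\ref{Slinear} yields $S_{2,b}(M + t) = c + S_{2,b}(t)$. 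Substituting, $E_{2,b}(N + t) = 8 + 2c + 2\, S_{2,b}(t) = v_t + m'$, which is $u$-attracted by the choice of $m'$. Therefore $E_{2,b}^{k' + 1}(N + t) = u$ for each $t$, and $\{N + 1, N + 2, \ldots, N + L\}$ is the desired sequence of $L$ consecutive $u$-attracted numbers.

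The main obstacle I anticipate is the parity verification of $m'$: this is precisely what allows the additive shift $m'$ from Lemma~\ref{Egood} to be realized as a fixed digit block $M$ placed below the leading $2$, so that the whole consecutive block is funneled in one step of $E_{2,b}$ onto the $u$-attracted set $\{v_t + m'\}$. Without $m'$ being even, this digit-level absorption fails and the construction collapses; all the other verifications are routine consequences of Lemmas~\ref{Slinear} and~\ref{linear}.
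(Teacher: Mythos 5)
Your proposal is correct and follows essentially the same route as the paper's proof: split on the parity of $b$, use a $(2,b)$-good set of even targets, verify via the parity-preservation of $S_{2,b}$ and $R_{r,b}$ (for odd $b$) that the shift from Lemma~\ref{Egood} is even, and then absorb half of that shift into a block of $1$'s beneath a leading digit $2$ so that one application of $E_{2,b}$ funnels the whole consecutive run into the good set. The only cosmetic difference is that you take the good set to be exactly $\{8+2S_{2,b}(t)\}$ rather than the paper's interval $\{2j+8 \mid 0\leq j\leq M\}$ containing it, which changes nothing of substance.
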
 

\begin{proof}
    For $b$ even, the result is immediate from Theorem~\ref{T:d-u-attracted}.  So let
$b\geq 3$ be odd. Fix $L \in \ZZ^+$, the desired sequence length, and let $u\in U_{2,b}^E\cap2\ZZ^+$. Set 
\[
M=2\cdot\max_{1\leq i\leq L}\{S_{2,b}(i)\}.
\]
Set $T = \{2j+8\mid 0\leq j\leq M\}$.  By Theorem~\ref{goodthm}, $T$ is $(2,b)$-good and so we can fix $n$, $k$, $r$, and $m$ as in the statement of Lemma~\ref{Egood}.

Since $b$ is odd, $S_{2,b}$ and $R_{r,b}$ preserve parity. Letting $t \in T$, we have $t$ is even and $S_{2,b}^{k-1}(t+n)=1$, implying that $n$ is odd. Further, since $u$ is even, $R_{r,b}^{k-1}(R_{r,b}(u)-1)$ is odd. Hence $m = R_{r,b}^{k-1}(R_{r,b}(u)-1) + n$ is even.
Set $m^\prime = m/2$.

Fix $w\in \ZZ^+$ such that $b^w>L$, and define
\[A = 2b^{m^\prime+w} + R_{w,b}(m^\prime).\]
Now, let $h \in \ZZ^+$ such that $h \leq L$. Then,
using Lemmas~\ref{Slinear} and~\ref{linear},
\begin{align*}
E_{2,b}(A+h) & = E_{2,b}(2b^{m^\prime+w} + R_{w,b}(m^\prime) + h) \\
& = E_{2,b}(2) + 2\left(S_{2,b}(R_{w,b}(m^\prime)) + S_{2,b}(h)\right) \\
& = 8 + 2\left(m^\prime+S_{2,b}(h)\right)\\
& = 8+m+2S_{2,b}(h),
\end{align*}
which is even.
Since $0 < 2S_{2,b}(h)\leq M$, we have $8+m< E_{2,b}(A+h)\leq 8+m+M < 8 + m + 2M$.  Therefore, for some $t\in T$, $E_{2,b}(A+h) = t + m$.  So, by Lemma~\ref{Egood}, $E^{k+1}_{2,b}(A+h) = E^{k}_{2,b}(t+m) = u$.  Hence, 
$A+1, A+2, \dots, A+L$ is a sequence of $L$ consecutive $u$-attracted numbers for $E_{2,b}$, as required.
\end{proof}

We conclude with a result on consecutive non-$b$-elated numbers.

\begin{corollary}
Let $b\geq 2$.  If there exists some $u \in U_{2,b}^E - \{1\}$, then 
there exist arbitrarily long finite sequences of consecutive numbers that are not $b$-elated.
\end{corollary}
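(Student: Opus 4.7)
The key observation is that any $u$-attracted number for a $u \in U_{2,b}^E - \{1\}$ cannot be $b$-elated: iterating $E_{2,b}$ from such a number eventually lands in the cycle containing $u$, which is disjoint from the fixed point $\{1\}$. So it suffices to produce arbitrarily long runs of consecutive integers each of which is either $u$-attracted for some $u \neq 1$ or (when $b$ is odd) automatically non-elated by Lemma~\ref{oddbase}. Theorems~\ref{T:d-u-attracted} and~\ref{T:evenconsec} already supply $u$-attracted numbers in arithmetic progression; the task is to arrange that the runs are truly consecutive.

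I would split the proof into three cases. First, if $b$ is even, then $d = \gcd(2,b-1) = 1$ and Theorem~\ref{T:d-u-attracted} immediately gives arbitrarily long consecutive sequences of $u$-attracted numbers. Second, if $b$ is odd and one can take $u$ to be even (that is, the cycle of $u$ has an even element), Theorem~\ref{T:evenconsec} does the job directly.

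The remaining case, $b$ odd with $u$ odd, is the substantive one. I would apply Theorem~\ref{T:d-u-attracted} with $d = 2$, using the construction of Lemma~\ref{Egood} with $T = \{2, 4, \ldots, 2L\}$ to produce the $2$-consecutive sequence $\{2+m, 4+m, \ldots, 2L+m\}$ of $u$-attracted numbers. A parity analysis then shows this sequence consists of odd numbers: for $b$ odd, both $S_{2,b}$ and $R_{r,b}$ preserve parity, so $S_{2,b}^{k-1}(t+n) = 1$ with $t$ even forces $n$ odd, and $R_{r,b}^{k-1}(R_{r,b}(u)-1) \equiv u - 1 \pmod 2$, yielding $m \equiv u \equiv 1 \pmod 2$. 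The interleaved integers $3+m, 5+m, \ldots, (2L-1)+m$ are then all even, hence non-elated by Lemma~\ref{oddbase}. Combining, $2+m, 3+m, \ldots, 2L+m$ is a run of $2L - 1$ consecutive non-$b$-elated integers, and $L$ is arbitrary.

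The main obstacle is this final case: arranging for the arithmetic progression of $u$-attracted numbers produced by Lemma~\ref{Egood} to land on the odd residue class modulo $2$, so that the gap positions are automatically non-elated. This reduces to the short parity calculation for $m = R_{r,b}^{k-1}(R_{r,b}(u)-1) + n$, which exploits parity-preservation of $S_{2,b}$ and $R_{r,b}$ for odd $b$.
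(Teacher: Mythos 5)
Your proof is correct and follows essentially the same three-case decomposition as the paper's own proof. The only difference is in the final case: where you trace the parity of $m$ through the explicit construction in Lemma~\ref{Egood}, the paper simply observes via Lemma~\ref{oddbase} that, for $b$ odd, even numbers always map to even numbers, so any $u$-attracted number for odd $u$ must itself be odd --- a slightly shorter route to the same parity conclusion.
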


\begin{proof}
If $b$ is even, the result follows directly from Theorem~\ref{T:d-u-attracted}.  If $b$ is odd and $u$ is even, it follows directly from Theorem~\ref{T:evenconsec}.  If both $b$ and $u$ are odd, then Theorem~\ref{T:d-u-attracted} yields the existence of a $2$-consecutive sequence of arbitrary finite length $L \in \ZZ^+$ of $u$-attracted numbers for $E_{2,b}$, say $a, a+2, \dots, a+(2L-2)$.  By Lemma~\ref{oddbase}, $a$ is odd.  Also by Lemma~\ref{oddbase}, the even numbers $a+1,a+3,\dots, a+(2L-1)$ must be mapped by $E_{2,b}$ to even numbers.  Hence, none of the numbers in the sequence  $a, a+1, \dots, a+(2L-1)$ of length $2L$ are $b$-elated numbers.
\end{proof}

\section{Heights of \texorpdfstring{$b$}{b}-Elated Numbers}\label{heights}

In this section, we study the number of iterations of $E_{2,b}$ necessary for a given $b$-elated number to reach 1.  The {\it height} of a $b$-elated number $a$ is the smallest number $m \geq 0$ such that $E_{2,b}^m(a) = 1$.  We first consider the general case in which the base $b \geq 2$ is arbitrary, then in Section~\ref{heights10}, we narrow our attention to the base $10$ case.  Of particular interest is the smallest positive integer of each height. 
(For analogous results for $b$-happy numbers, see~\cite{cai08,GH22,GH22C,GT03,mei18,OEIS}.)

For each $k\geq 0$ and $b\geq 2$, let
$\varepsilon_{k,b}$ denote the smallest $b$-elated number of height $k$.  Clearly, for each $b\geq 2$, $\varepsilon_{0,b} = 1$ and
$\varepsilon_{1,b} = b$. The values of $\varepsilon_{k,b}$ for small values of $k$ and $2\leq b \leq 10$, found by a direct computer search, are given in Table~\ref{smallbaseheights}.

\begin{table}[hbt!]
\begin{center}
\begin{tabular}{|c|l|} \hline
    $b$ & $\varepsilon_{k,b}$ (written in base $b$) for $k=2, 3,\dots$ \\ \hline
2 &
11, 111, 1111111 \\ \hline
3 &
111, 1222, 12222222222222\\
\hline 
4 &
22, 13, 122, 23, 113, 3, 111, 333, 3222, 31123333333 \\
\hline
5 &
12, 3, 34, 133, 3444, 3334444444444444444\\
\hline
6 &
112, 233, 3233, 4555555555\\
\hline
7 &
1112, 1266666666666\\
\hline
8 &
2, 11, 47, 32, 15, 75, 55, 277, 57, 146, 35, 367, 2577, 76677 \\
\hline
9 &
122, 5, 12, 113, 1666, 527788\\
\hline
10 &
13, 51, 67, 97, 668, 77, 746, 92, 717, 5369, 8888999999 \\
\hline
\end{tabular}
\caption{Smallest $b$-elated numbers of small heights for $2 \leq b \leq 10$} 
\label{smallbaseheights}
\end{center}
\end{table}

It is easy to see that for $k\geq 2$ 
the non-leading base $b$ digits of $\varepsilon_{k,b}$ are nonzero and in nondecreasing order.  With this in mind, we define a \emph{$b$-basic} number to be any number greater than $b$ with non-leading base $b$ digits nonzero and in nondecreasing order. If all of the base $b$ digits of a number are in nondecreasing order, including the leading digit, we call that number \emph{fully $b$-basic}.

Noting that a $2$-basic number is, by definition, a string of ones, the following is immediate.
\begin{theorem}\label{base2}
For $k\geq 1$ 
\[\varepsilon_{k+1,2} = \sum_{i=0}^{\varepsilon_{k,2} - 1} 2^i=2^{\varepsilon_{k,2}}-1.\]
\end{theorem}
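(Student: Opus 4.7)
My plan is to reduce the theorem to three elementary observations specific to base $2$. First, I would observe that since every base-$2$ digit $a_i\in\{0,1\}$ satisfies $a_i^2=a_i$, and the leading digit $a_n=1$, for any positive integer $a=\sum_{i=0}^n a_i 2^i$ we have
\[E_{2,2}(a) \;=\; a_n\sum_{i=0}^n a_i^2 \;=\; \sum_{i=0}^n a_i,\]
which is simply the number of $1$'s in the base-$2$ expansion of $a$. I will write $s(a)$ for this quantity.

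By the definition of height, $\varepsilon_{k+1,2}$ is the smallest $a\in\ZZ^+$ such that $E_{2,2}(a)=s(a)$ has height exactly $k$. The key combinatorial step I would use is that for any fixed $m\in\ZZ^+$, the smallest positive integer $a$ with $s(a)=m$ is $2^m-1=\sum_{i=0}^{m-1}2^i$, since this arrangement places all $m$ ones in the lowest possible bit positions; any other binary string with $m$ ones puts at least one $1$ in a bit of value $\geq 2^m$ and is therefore strictly larger.

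To conclude, I would minimize $2^m-1$ over all $m$ of height $k$. Since $2^m-1$ is strictly increasing in $m$, the minimum is attained at the smallest such $m$, which is $\varepsilon_{k,2}$ by definition. This yields
\[\varepsilon_{k+1,2} \;=\; 2^{\varepsilon_{k,2}}-1 \;=\; \sum_{i=0}^{\varepsilon_{k,2}-1}2^i,\]
and one verifies that this value has height exactly $k+1$ because $E_{2,2}(2^{\varepsilon_{k,2}}-1)=\varepsilon_{k,2}$, a number of height $k$. There is essentially no obstacle here: the whole argument rests on the fact that in base $2$ the elated function collapses to the binary digit-sum function, so the authors' parenthetical remark that a $2$-basic number must be a string of ones does all the work.
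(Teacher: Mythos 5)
Your proof is correct and follows essentially the same route as the paper, which simply notes that a $2$-basic number is a string of ones and declares the result immediate; you have just made explicit the underlying facts that $E_{2,2}$ is the binary digit-sum and that the least integer with digit-sum $m$ is $2^m-1$. No gaps.
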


Similarly, we have a concrete characterization of $\varepsilon_{k,3}$, which parallels a result on $3$-happy numbers of minimal height found in~\cite[Lemma 5.1]{lap}.
\begin{theorem}\label{T:base3elated}
    For $k\geq2$
    \[
    \varepsilon_{k+1,3} 
    = 2\cdot3^{\frac{\varepsilon_{k,3}-1}{4}}-1.
    \]
\end{theorem}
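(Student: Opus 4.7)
The plan is to induct on $k \geq 2$ with the strengthened hypothesis that $\varepsilon_{k,3} \equiv 1 \pmod{4}$, which makes the exponent $(\varepsilon_{k,3}-1)/4$ in the formula a non-negative integer. The base case $k = 2$ follows from $\varepsilon_{2,3} = 13 \equiv 1 \pmod{4}$ (see Table~\ref{smallbaseheights}). For the inductive step, I first pin down the digit structure of $\varepsilon_{k+1,3}$: the paper's observation that $\varepsilon_{k+1,3}$ is $3$-basic, combined with Lemma~\ref{oddbase} (which forces the leading digit, being odd, to equal $1$), gives that the base $3$ expansion of $\varepsilon_{k+1,3}$ has the form $1\underbrace{1 \cdots 1}_{a}\underbrace{2 \cdots 2}_{c}$ for non-negative integers $a$ and $c$ with $a + c \geq 1$. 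Standard geometric-series summation then yields value $N(a,c) := (3^{a+c+1} + 3^c - 2)/2$ and $E_{2,3}(N(a,c)) = 1 + a + 4c$.

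Set $t := (\varepsilon_{k,3}-1)/4$. The specific value $N(0, t) = 2\cdot 3^t - 1$ has $E_{2,3}$-image $\varepsilon_{k,3}$, hence is $3$-elated of height $k + 1$, giving $\varepsilon_{k+1,3} \leq 2\cdot 3^t - 1$. For the reverse inequality, $E_{2,3}(\varepsilon_{k+1,3}) = 1 + a + 4c$ must be a $3$-elated number of height $k$, so at least $\varepsilon_{k,3} = 4t + 1$, forcing $a + 4c \geq 4t$. It then remains to show
\[
3^{a+c+1} + 3^c \geq 4 \cdot 3^t,
\]
with equality only at $(a, c) = (0, t)$. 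I plan to split this into two cases. If $c \geq t$, then $a + c \geq c \geq t$, so $3^{a+c+1} \geq 3 \cdot 3^t$ and $3^c \geq 3^t$, summing to at least $4\cdot 3^t$, with equality precisely when $a = 0$ and $c = t$. If $c \leq t - 1$, then $a + 4c \geq 4t$ yields $a + c \geq 4t - 3c \geq t + 3$, so $3^{a+c+1} \geq 3^{t+4} > 4\cdot 3^t$ on its own.

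This establishes $\varepsilon_{k+1,3} = 2\cdot 3^t - 1$. To propagate the induction hypothesis, observe that $3^t \equiv (-1)^t \pmod{4}$, so $2\cdot 3^t - 1 \equiv \pm 2 - 1 \equiv 1 \pmod{4}$ regardless of the parity of $t$. I anticipate the main obstacle to be less the inequality itself (which is clean once the case split is chosen) and more the bookkeeping around the $3$-basic reduction: specifically, ruling out the possibility that a smallest $3$-elated number of height $k+1$ could arise by mapping to some height-$k$ $3$-elated number strictly larger than $\varepsilon_{k,3}$ with a smaller preimage, which is precisely what the two-case inequality forbids.
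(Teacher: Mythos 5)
Your proposal is correct. It shares the paper's overall skeleton---induction carried by the congruence $\varepsilon_{k,3}\equiv 1\pmod 4$, so that $t=(\varepsilon_{k,3}-1)/4\in\ZZ$, together with the explicit candidate $2\cdot 3^{t}-1$, whose base $3$ expansion is $1\underbrace{2\cdots 2}_{t}$ and whose image under $E_{2,3}$ is $\varepsilon_{k,3}$---but the minimality half is argued by a genuinely different route. The paper proceeds by contradiction: any height-$(k+1)$ elated number smaller than the candidate has leading digit $1$ (Lemma~\ref{oddbase}) and at most as many base $3$ digits, hence a strictly smaller digit-square sum, so its image would be a height-$k$ elated number below $\varepsilon_{k,3}$, which is impossible; this needs neither the $3$-basic structure nor any case analysis. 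You instead use the $3$-basic observation to parametrize every possible minimizer as $N(a,c)=(3^{a+c+1}+3^c-2)/2$ with image $1+a+4c$, translate minimality of $\varepsilon_{k,3}$ into $a+4c\geq 4t$, and prove directly that this forces $3^{a+c+1}+3^c\geq 4\cdot 3^t$ with equality only at $(a,c)=(0,t)$; your two cases ($c\geq t$ and $c\leq t-1$) both check out. Your version is longer but handles explicitly the scenario the paper's contradiction rules out only implicitly, namely a small preimage of a height-$k$ number strictly larger than $\varepsilon_{k,3}$. A minor structural bonus on your side: since the induction carries only the congruence, you do not need the table value of $\varepsilon_{3,3}$ as a separate base-case verification, and your propagation step $2\cdot 3^{t}-1\equiv 1\pmod 4$ is correct for either parity of $t$.
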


\begin{proof}
By Table~\ref{smallbaseheights}, $\varepsilon_{2,3}=111$ ($13$ in base $10$) and $\varepsilon_{3,3}=1222$ ($53$ in base $10$).  Since $53=2\cdot3^{\frac{13-1}{4}}-1$, the theorem is true for $k=2$. 

Proceeding by induction, assume that for some $k\geq 3$
    \[
    \varepsilon_{k,3}=2\cdot3^{\frac{\varepsilon_{k-1,3}-1}{4}}-1.
    \]
Note that $\varepsilon_{k,3} \equiv 1 \pmod 4$ and therefore we can define an integer
\[x=2\cdot3^{\frac{\varepsilon_{k,3}-1}{4}}-1
= 1\cdot 3^{\frac{\varepsilon_{k,3}-1}{4}} +
\sum_{i=0}^{(\varepsilon_{k,3}-1)/4-1} 2\cdot 3^i.\] 
Then
    \[
    E_{2,3}(x)=1\cdot\left(1^2+\frac{\varepsilon_{k,3}-1}{4}\cdot 2^2\right)=\varepsilon_{k,3},
    \]
    and therefore $x$ is a $3$-elated number of height $k+1$.

Suppose for a contradiction that $\varepsilon_{k+1,3} < x$.  Then $\varepsilon_{k+1,3}$ has at most as many digits as $x$ has and, by Lemma~\ref{oddbase}, $\varepsilon_{k+1,3}$ has leading digit 1.  Therefore, 
$E_{2,3}(\varepsilon_{k+1,3}) < E_{2,3}(x) = \varepsilon_{k,3}$, making $E_{2,3}(\varepsilon_{k+1,3})$ a height $k$ 3-elated number that is smaller than $\varepsilon_{k,3}$, a contradiction.  Hence $\varepsilon_{k+1,3} = x$, as desired.
\end{proof}
We note that, by similar reasoning, for bases $b=2$ and $b=3$ and each $k\geq 0$, $\varepsilon_{k,b}$ is also the smallest $b$-happy number of height $k$. (See also~\cite[Corollary 4.1, Lemma 5.1]{lap}.)

To help understand the properties and in some cases the actual values of $\varepsilon_{k,b}$, we consider specific sets of preimages under $S_{2,b}$. For $b \geq 3$ and $a\in \ZZ^+$, let $\mathscr S_{2,b}(a)$ denote the set of fully $b$-basic numbers of shortest length in the preimage of $a$ under $S_{2,b}$.  We begin with a theorem about the elements of $\mathscr S_{2,b}(a)$ for each $a\in \ZZ^+$. 

\begin{theorem}\label{T:non9theorem2}
    Let $b\geq 3$, $a\in \ZZ^+$, and $u\in \mathscr S_{2,b}(a)$. The number of non-$(b-1)$ digits in the base $b$ representation of $u$ is strictly less than $2b$.
\end{theorem}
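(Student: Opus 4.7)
The plan is to proceed by contradiction: assume $u\in\mathscr{S}_{2,b}(a)$ has $m\geq 2b$ non-$(b-1)$ digits, and construct a fully $b$-basic $u'\in S_{2,b}^{-1}(a)$ of strictly smaller length. Since $u$ is fully $b$-basic, the non-$(b-1)$ digits are exactly the $m$ leading digits of $u$, each lying in $\{1,\ldots,b-2\}$, while the remaining $l$ digits all equal $b-1$; in particular $\mathrm{length}(u)=m+l$.

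The key step is a digit-repackaging argument for the sum $T$ of squares of the non-$(b-1)$ digits, which satisfies $T\leq m(b-2)^2$. Using the division algorithm I would write $T=q(b-1)^2+r$ with $0\le r<(b-1)^2$, and then apply Lagrange's four-square theorem to write $r=c_1^2+c_2^2+c_3^2+c_4^2$ with $c_i\ge 0$. Because $r<(b-1)^2$, each $c_i\le b-2$, so all $c_i$ are legitimate base $b$ digits. Let $k'$ denote the number of nonzero $c_i$'s, so $k'\le 4$. I would then define $u'$ as the unique fully $b$-basic arrangement of the multiset consisting of the $k'$ nonzero $c_i$'s, $q$ new copies of $b-1$, and the $l$ original $(b-1)$-digits of $u$, so that $S_{2,b}(u')=r+q(b-1)^2+l(b-1)^2=T+l(b-1)^2=S_{2,b}(u)=a$.

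To derive the contradiction I would show $\mathrm{length}(u')=k'+q+l<m+l=\mathrm{length}(u)$, i.e., $q+k'<m$. Since $k'\le 4$ and $q\le T/(b-1)^2\le m(b-2)^2/(b-1)^2$, it suffices to prove $m(b-2)^2<(m-4)(b-1)^2$, equivalently $m(2b-3)>4(b-1)^2$. With $m\ge 2b$ this reduces to $2b(2b-3)>4(b-1)^2$, i.e., $2b>4$, which holds for all $b\ge 3$. Hence $q\le m-5$ and $q+k'\le m-1<m$, contradicting the minimality of the length of $u$.

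I expect the main obstacle to be the numerical bookkeeping at the last step: the threshold $4(b-1)^2/(2b-3)$ falls short of $2b$ by only about $2(b-2)/(2b-3)$, so Lagrange's four-square bound and the small per-digit saving obtained by trading a digit of size $\le b-2$ for one of size $b-1$ must combine with essentially no room to spare. A pair of minor sanity checks—that $T>0$ (so $u'$ remains a positive integer) and that the constructed digit multiset always admits a valid fully $b$-basic arrangement with a nonzero leading digit—round out the argument.
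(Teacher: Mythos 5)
Your proposal is correct and follows essentially the same route as the paper's proof: dividing the sum of squares of the non-$(b-1)$ digits by $(b-1)^2$, applying Lagrange's four-square theorem to the remainder to build a shorter fully $b$-basic preimage, and concluding via the inequality $m(2b-3)>4(b-1)^2$, which is exactly the paper's bound $t\leq 4(b-1)^2/\bigl((b-1)^2-(b-2)^2\bigr)<2b$ recast as a contradiction. The only cosmetic differences are the contrapositive framing and your counting only the nonzero $c_i$'s.
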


\begin{proof}
Fix $b$, $a$, and $u$ as in the theorem.  
Let $t\geq 0$ denote the number of non-$(b-1)$ digits in the base $b$ representation of $u$. Then $u$ is of the form
\[u=a_1a_2\cdots a_t \underbrace{(b-1)(b-1)\cdots(b-1)}_{\ell}\] with each $a_i< b-1$ and $\ell \geq 0$.

Fix $q\geq 0$ and $0\leq r<(b-1)^2$ such that 
    \begin{equation*}
        a_1^2+a_2^2+\cdots+a_t^2=(b-1)^2q+r.
    \end{equation*} 
By Lagrange's Theorem, there exist nonnegative integers $c_1$, $c_2$, $c_3$, $c_4$, such that $r=c_1^2+c_2^2+c_3^2+c_4^2$.  Since $r<\left(b-1\right)^2$, we necessarily have that each $c_i<b-1$.
Thus
    \begin{align*}
    a & = 
    S_{2,b}(u) \\ 
    &=a_1^2+a_2^2+\cdots+a_t^2+(b-1)^2\ell \\
    & = (b-1)^2q+r+(b-1)^2\ell \\
    & = c_1^2+c_2^2+c_3^2+c_4^2 +(b-1)^2q + (b-1)^2\ell \\
    & = S_{2,b}(m),
    \end{align*}
 where $m$ is the number with base $b$ representation given by
\[m = c_1c_2c_3c_4\underbrace{(b-1)(b-1)\cdots(b-1)}_{q}\underbrace{(b-1)(b-1)\cdots(b-1)}_{\ell}.\]
Now $u$ has $t+\ell$ digits  
while $m$ has at most $4+q+\ell$ digits.  
Since $\mathscr S_{2,b}(a)$ consists of numbers of minimal length mapping to $a$, $t+\ell \leq 4+q+\ell$, implying that $t - 4 \leq q$.

Recalling that each 
$a_i< b-1$, we have 
\[(b-1)^2(t-4) \leq (b-1)^2q \leq a_1^2+a_2^2+\cdots+a_t^2\leq (b-2)^2t,\]
and so, since $2b-3 > 1$,
\[t \leq \frac{4(b-1)^2}{(b-1)^2-(b-2)^2}
=2b-1+\frac{1}{2b-3} < 2b,
\] 
as required.
\end{proof}

The following corollary on $b$-happy numbers of smallest heights is immediate.  The case of $b = 10$ was proved previously by
Cai and Zhou~\cite[Corollary to Theorem 1]{cai08}.  (See also~\cite[Theorem 3.1]{mei18}.)

\begin{corollary}\label{C:numberofnonb-1}
    For any $b\geq 3$ and $k\geq 1$, the number of non-$(b-1)$ digits in 
    the minimal $b$-happy number of height $k$ 
    is strictly less than $2b$.
\end{corollary}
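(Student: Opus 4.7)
The plan is to reduce the statement directly to Theorem~\ref{T:non9theorem2} by showing that the minimal $b$-happy number of height $k$, call it $h_k$, lies in $\mathscr{S}_{2,b}(S_{2,b}(h_k))$. The key observation is that $S_{2,b}(h_k)$ has height exactly $k-1$, so any positive integer $m$ satisfying $S_{2,b}(m)=S_{2,b}(h_k)$ is automatically a $b$-happy number of height exactly $k$; this lets us derive a contradiction from the minimality of $h_k$ whenever such an $m$ is strictly smaller than $h_k$.

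First I would argue that $h_k$ must be fully $b$-basic. If $h_k$ contained a digit $0$, then deleting that digit would produce a positive integer with the same leading digit, strictly fewer base $b$ digits, and the same value under $S_{2,b}$, contradicting minimality. If instead the digits of $h_k$ were not in nondecreasing order, then the smallest permutation of its digit string would be a strictly smaller positive integer with the same value under $S_{2,b}$, again a contradiction. Thus the digits of $h_k$ are all nonzero and arranged in nondecreasing order.

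Next I would verify that $h_k$ has the minimal length among fully $b$-basic preimages of $S_{2,b}(h_k)$. If some fully $b$-basic $m$ with $S_{2,b}(m)=S_{2,b}(h_k)$ had strictly fewer base $b$ digits than $h_k$, then $m<h_k$ (since any positive integer is strictly less than $b$ raised to its number of base $b$ digits, while $h_k$ is at least $b$ raised to one less than its own number of digits), once more contradicting the minimality of $h_k$. Hence $h_k\in\mathscr{S}_{2,b}(S_{2,b}(h_k))$, and applying Theorem~\ref{T:non9theorem2} with $a=S_{2,b}(h_k)$ and $u=h_k$ yields the bound of strictly less than $2b$ non-$(b-1)$ digits. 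No step requires delicate computation; the only subtlety worth flagging is keeping careful track that $S_{2,b}(h_k)$ has height exactly $k-1$, so that every competing smaller preimage has height exactly $k$ and can therefore contradict minimality.
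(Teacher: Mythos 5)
Your reduction to Theorem~\ref{T:non9theorem2} is exactly the deduction the paper intends (it offers no written proof, calling the corollary immediate), and your argument is correct for every $k\geq 2$. There is, however, a genuine failure at $k=1$, which the statement includes. Your key observation --- that any positive integer $m$ with $S_{2,b}(m)=S_{2,b}(h_k)$ is automatically $b$-happy of height exactly $k$ --- has one exception, namely $m=1$, whose height is $0$ rather than $1$ plus the height of $S_{2,b}(m)$; this exception is harmless unless $S_{2,b}(h_k)=1$, i.e.\ unless $k=1$. And for $k=1$ the conclusion of your first step is in fact false: the minimal $b$-happy number of height $1$ is $h_1=b$, written $(10)_b$, which contains the digit $0$ and is not fully $b$-basic. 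Deleting that $0$ produces the smaller preimage $1$, but this does not contradict the minimality of $h_1$, because $1$ has height $0$, not height $1$. Consequently $h_1\notin\mathscr S_{2,b}\!\left(S_{2,b}(h_1)\right)$ and Theorem~\ref{T:non9theorem2} cannot be applied to it.

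The gap is trivial to close: for $k=1$ the number $(10)_b$ has exactly $2$ non-$(b-1)$ digits (as $b\geq 3$ gives $b-1\geq 2$), and $2<2b$, so the corollary holds by inspection in that case. With $k=1$ disposed of separately, the remaining steps --- all digits of $h_k$ nonzero, digits nondecreasing, and $h_k$ of minimal length among fully $b$-basic preimages of $S_{2,b}(h_k)$, each step using that a competing preimage $m$ necessarily satisfies $m\neq 1$ (else $S_{2,b}(h_k)=1$ and $k\leq 1$) and hence has height exactly $k$ --- go through verbatim for $k\geq 2$.
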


Applying Theorem~\ref{T:non9theorem2} to $b$-elated numbers yields the following.

\begin{corollary}\label{T:non9theorem}
    For any $b\geq 3$ and $k\geq 1$, the number of non-leading non-$(b-1)$ digits in the base $b$ representation of $\varepsilon_{k,b}$ is strictly less than $2b$.
\end{corollary}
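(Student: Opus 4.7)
The plan is to apply Theorem~\ref{T:non9theorem2} to the integer formed by the non-leading digits of $\varepsilon_{k,b}$. The case $k = 1$ is immediate since $\varepsilon_{1,b} = b$ has the single non-leading digit $0$, giving at most one non-leading non-$(b-1)$ digit, certainly less than $2b$. The case where $\varepsilon_{k,b}$ is a single digit is likewise trivial. So I focus on $k \geq 2$ with $\varepsilon_{k,b}$ having more than one digit; here the paper has already observed that the non-leading digits of $\varepsilon_{k,b}$ are nonzero and nondecreasing.

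Write $\varepsilon_{k,b} = d b^n + y$, where $d$ is the leading digit and $y$ is the integer whose base-$b$ representation consists of the non-leading digits of $\varepsilon_{k,b}$. Then $y$ is fully $b$-basic. Using Lemma~\ref{Slinear} to split $S_{2,b}(\varepsilon_{k,b}) = d^2 + S_{2,b}(y)$ together with the definition of $E_{2,b}$, the equation $E_{2,b}(\varepsilon_{k,b}) = \varepsilon_{k-1,b}$ becomes $d(d^2 + S_{2,b}(y)) = \varepsilon_{k-1,b}$, so $d \mid \varepsilon_{k-1,b}$ and $S_{2,b}(y) = a$, where $a := \varepsilon_{k-1,b}/d - d^2 > 0$.

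The key claim is that $y \in \mathscr{S}_{2,b}(a)$; once this is established, Theorem~\ref{T:non9theorem2} immediately bounds the number of non-$(b-1)$ digits of $y$ — equivalently, the non-leading non-$(b-1)$ digits of $\varepsilon_{k,b}$ — by strictly less than $2b$. To prove the claim, suppose for contradiction that some fully $b$-basic $y'$ with $n' < n$ digits satisfies $S_{2,b}(y') = a$, and set $\varepsilon' := d b^{n'} + y'$. Then $E_{2,b}(\varepsilon') = d(d^2 + a) = \varepsilon_{k-1,b}$, so $\varepsilon'$ has $E_{2,b}$-height at most $k$. A short argument rules out height $k' < k$: iterating $k - 1$ times would give $E_{2,b}^{k-1}(\varepsilon') = 1$, but also $E_{2,b}^{k-1}(\varepsilon') = E_{2,b}^{k-2}(\varepsilon_{k-1,b}) \neq 1$, since $\varepsilon_{k-1,b}$ has height exactly $k-1$. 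So $\varepsilon'$ has height $k$, forcing $\varepsilon' \geq \varepsilon_{k,b}$; but $\varepsilon' < b^{n'+1} \leq b^n \leq \varepsilon_{k,b}$, a contradiction.

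The main obstacle is this minimality step for $y$: one must produce a competitor $\varepsilon'$ from any hypothetical shorter fully $b$-basic preimage of $a$, and then carefully verify that its height is exactly $k$ (not merely at most $k$) before invoking the minimality of $\varepsilon_{k,b}$ to derive the contradiction.
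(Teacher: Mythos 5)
Your overall strategy --- peel off the leading digit $d$, show that the tail $y$ lies in $\mathscr S_{2,b}(a)$ for a suitable $a$, and invoke Theorem~\ref{T:non9theorem2} --- is exactly the paper's, and your minimality argument for $y$ (building a shorter competitor $\varepsilon' = db^{n'}+y'$, checking its height is exactly $k$, and contradicting the minimality of $\varepsilon_{k,b}$) is a correct and slightly more explicit version of the paper's comparison of $v$ with $\varepsilon_{k,b}$. However, there is a genuine error in your setup: you assume $E_{2,b}(\varepsilon_{k,b}) = \varepsilon_{k-1,b}$ and define $a = \varepsilon_{k-1,b}/d - d^2$. This identity is false in general; the paper explicitly warns in Section~\ref{heights10} that ``it appears that there does not exist a value such that for each larger $k$, $E_2$ maps $\varepsilon_k$ to $\varepsilon_{k-1}$.'' Concretely, for $b=10$ and $k=7$ one has $\varepsilon_7 = 77$ with $E_2(77) = 7(49+49) = 686$, a height-$6$ elated number that is \emph{not} $\varepsilon_6 = 668$; moreover $d=7$ does not divide $668$, so your claims ``$d \mid \varepsilon_{k-1,b}$'' and ``$a = \varepsilon_{k-1,b}/d - d^2 \in \ZZ$'' both fail there.

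The repair is simple and turns your argument into the paper's: all that is true, and all that is needed, is that $E_{2,b}(\varepsilon_{k,b})$ is \emph{some} elated number of height $k-1$, not the minimal one. Define $a = E_{2,b}(\varepsilon_{k,b})/d - d^2 = S_{2,b}(y)$, which is an integer by construction rather than by any divisibility claim. Your competitor then satisfies $E_{2,b}(\varepsilon') = d(d^2+a) = E_{2,b}(\varepsilon_{k,b})$, whose height is exactly $k-1$ because $\varepsilon_{k,b}$ has height exactly $k$; hence $\varepsilon'$ has height exactly $k$ and the contradiction with $\varepsilon' < \varepsilon_{k,b}$ goes through verbatim. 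With that substitution your proof is correct and essentially identical to the paper's. (Your separate treatment of $k=1$, where the tail is the digit $0$ and the main argument does not apply, is a reasonable touch that the paper leaves implicit.)
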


\begin{proof}
Let $d$ be the leading digit and let $\varepsilon^\prime$ be the number formed by the non-leading digits of the base $b$ representation of $\varepsilon_{k,b}$. 
Then $\varepsilon^\prime$ is fully $b$-basic and 
$S_{2,b}(\varepsilon^\prime) = E_{2,b}(\varepsilon_{k,b})/d - d^2$.

Let $u$ be an arbitrary fully $b$-basic positive integer such that $S_{2,b}(u) = S_{2,b}(\varepsilon^\prime)$.  Then letting $v$ be the number formed by prepending $d$ onto $u$, we have 
\[E_{2,b}(v) = d(d^2 + S_{2,b}(u)) = d(d^2 + S_{2,b}(\varepsilon^\prime)) = E_{2,b}(\varepsilon_{k,b}),
\]
implying that $v$ is a $b$-elated number of height $k$. Thus $\varepsilon_{k,b} \leq v$, and so $\varepsilon' \leq u$.  Since $u$ was arbitrary, it follows that $\varepsilon' \in \mathscr S_{2,b}(E_{2,b}(\varepsilon_{k,b})/d - d^2)$.  By Theorem~\ref{T:non9theorem2} the number of non-$(b-1)$ digits in the base $b$ representation of $\varepsilon'$ is strictly less than $2b$, and the corollary follows.
\end{proof}

With Theorem~\ref{General S theorem}, below, we reduce
the determination of the sets $\mathscr S_{2,b}(a)$ to only those for  small values of $a$ and a short calculation.   
In Section~\ref{heights10}, we demonstrate this, using the values of $\mathscr S_{2,10}(a)$ for small $a$ in computing values of $\varepsilon_{k,10}$ for $k \leq 16$.

For each $b\geq 3$, define the set
\[T_b = \{S_{2,b}(n) \vert 1\leq n < b^{2b}\},\]
and note that for each $ m \in \ZZ^+ - T_b$, every element of
$\mathscr S_{2,b}(m)$ has more than $2b$ digits and hence by Theorem~\ref{T:non9theorem2}, ends in the digit $b-1$.  Fix 
$a^*_b \in T_b$ to be the largest positive integer $a$ such that some element of 
$\mathscr S_{2,b}(a)$ does not end in the digit $b-1$.
Set \[C_b = \left\lfloor\frac{a^*_b}{\left(b-1\right)^2}\right\rfloor.\] 
 Observe that $C_b(b-1)^2 \leq a^*_b < (C_b+1)(b-1)^2$.

We provide the values of $a_b^*$ and $C_b$ for small values of $b$ in Table~\ref{Cb}.

\begin{table}[hbt!]
\begin{center}
\begin{tabular}{|c|r|r|} \hline
    $b$ & $a_b^*$ & $C_b$  \\ \hline
3 & 3 & 0\\\hline
4 & 16 & 1\\\hline
5 & 31 & 1\\\hline
6 & 128 & 5\\\hline
7 & 191 & 5\\\hline
8 & 324 & 6\\\hline
9 & 368 & 5\\\hline
10 & 561 & 6\\\hline
\end{tabular}
\caption{Values of $a^*_b$ and $C_b$ for $3\leq b \leq 10$} 
\label{Cb}
\end{center}
\end{table}

\begin{theorem}\label{General S theorem}
Fix $b\geq3$ and 
$a > C_b\left(b-1\right)^2$. 
Set
 \[
q = \left\lfloor\frac{a-1}{\left(b-1\right)^2}\right\rfloor  - C_b \mbox{\ \ \ and\ \ \ }
a^\prime = a - \left(b-1\right)^2q.
\]
Then, the elements of $\mathscr S_{2,b}(a)$ are precisely the elements of $\mathscr S_{2,b}(a^\prime)$ with $q$ $(b-1)$'s appended to each. That is,
\[\mathscr S_{2,b}(a) = \left\{w b^q + (b-1)R_{0,b}(q) \vert w\in \mathscr S_{2,b}(a^\prime) \right\}.
\]
\end{theorem}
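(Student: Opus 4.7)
The plan is to verify some boundary arithmetic, prove a one-step reduction from $a$ to $a - (b-1)^2$, and then iterate this reduction $q$ times. First, unpacking the floor in $q = \lfloor(a-1)/(b-1)^2\rfloor - C_b$, I would check that $a'$ always lies in the range $C_b(b-1)^2 < a' \leq (C_b+1)(b-1)^2$. When $a \leq (C_b+1)(b-1)^2$, this forces $q = 0$ and $a' = a$, so the theorem reduces to a tautology (recalling $R_{0,b}(0) = 0$). Hence I may assume $q \geq 1$, and in particular $a > (C_b+1)(b-1)^2 > a_b^*$.

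The core step is the one-step reduction: for every $a > a_b^*$,
\[
\mathscr S_{2,b}(a) = \left\{wb + (b-1) \mid w \in \mathscr S_{2,b}(a - (b-1)^2)\right\}.
\]
To prove the forward inclusion, I would first show that every $v \in \mathscr S_{2,b}(a)$ ends in the digit $(b-1)$. If $a \in T_b$, this is immediate from the definition of $a_b^*$. If $a \notin T_b$, then every preimage of $a$ has at least $2b+1$ base $b$ digits, while Theorem~\ref{T:non9theorem2} allows at most $2b-1$ digits of $v$ to differ from $b-1$; being fully $b$-basic (hence nondecreasing), $v$ must therefore end in $(b-1)$. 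Writing $v = wb + (b-1)$, the truncation $w$ is itself fully $b$-basic with $S_{2,b}(w) = a - (b-1)^2$; a shorter fully $b$-basic preimage of $a - (b-1)^2$ would, after appending $(b-1)$, produce a fully $b$-basic preimage of $a$ strictly shorter than $v$, contradicting minimality, so $w \in \mathscr S_{2,b}(a-(b-1)^2)$. For the reverse inclusion, any $w \in \mathscr S_{2,b}(a-(b-1)^2)$ yields a fully $b$-basic preimage $wb + (b-1)$ of $a$ whose length matches that of every element of $\mathscr S_{2,b}(a)$ by the forward direction, hence it is minimal.

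Finally, I would iterate this one-step reduction $q$ times. For $0 \leq j \leq q-1$, the intermediate value satisfies $a - j(b-1)^2 \geq a - (q-1)(b-1)^2 = a' + (b-1)^2 > (C_b+1)(b-1)^2 > a_b^*$, so the reduction applies at each stage. Composing the $q$ reductions appends exactly $q$ trailing copies of $(b-1)$ to each element of $\mathscr S_{2,b}(a')$, assembled as the summand $(b-1)R_{0,b}(q)$, while shifting the prefix $w$ by $b^q$; this yields the claimed formula. The main obstacle is establishing that every minimal-length fully $b$-basic preimage of such an $a$ ends in $(b-1)$: the two cases $a \in T_b$ and $a \notin T_b$ must be treated separately, with the latter leaning on Theorem~\ref{T:non9theorem2} combined with a length lower bound to force the trailing $(b-1)$.
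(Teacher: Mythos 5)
Your proof is correct and follows essentially the same route as the paper: both hinge on showing that for $a > a_b^*$ every element of $\mathscr S_{2,b}(a)$ ends in the digit $b-1$ (via the definition of $a_b^*$ for $a \in T_b$ and Theorem~\ref{T:non9theorem2} plus the length bound for $a \notin T_b$), then use the truncation/appending correspondence with $a-(b-1)^2$ and minimality of length. The only difference is organizational — you isolate an explicit one-step reduction and compose it $q$ times, while the paper runs a minimal-counterexample induction on $a$ — and your check that the intermediate values stay above $(C_b+1)(b-1)^2$ keeps the iteration sound.
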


\begin{proof}
Let $b\geq3$ and 
$a > C_b\left(b-1\right)^2$.  Suppose for a contradiction that $a$ is minimal such that the theorem does not hold. 
If $q = 0$, then $a = a^\prime$ and the theorem is trivially true.  So $q \geq 1$ and thus 
$a > (C_b+1)\left(b-1\right)^2>a^*_b$.  

Let $\tilde{a} = a - (b-1)^2$ and note that 
$C_b\left(b-1\right)^2 < \tilde a < a$.  
By the minimality of $a$, the theorem holds for $\tilde a$.  Hence, 
letting $q_{\tilde{a}}$ and $\tilde{a}^\prime$ represent the values of $q$ and $a^\prime$ corresponding to $\tilde a$, we have
\[
q_{\tilde{a}} = \left\lfloor\frac{a - (b-1)^2-1}{\left(b-1\right)^2}\right\rfloor  - C_b = q-1 \geq 0, 
\]
\[
\tilde{a}^\prime = \tilde{a} - (b-1)^2q_{\tilde{a}} = a - (b-1)^2 - \left(b-1\right)^2(q-1)
= a^\prime,
\]
and therefore $\mathscr S_{2,b}(\tilde{a}) = \left\{w b^{q-1} + (b-1)R_{0,b}(q-1) \vert w\in \mathscr S_{2,b}(a^\prime) \right\}$.
Note further that, since $a > a^*_b$,  each element of $\mathscr S_{2,b}(a)$ ends in the digit $b-1$, and since $a > (b-1)^2$, each element of $\mathscr S_{2,b}(a)$ is strictly greater than $b-1$.

Now, given $c\in \mathscr S_{2,b}(a)$, let $c^-$ be the number obtained by dropping the last digit of $c$, \[c^- = (c- (b-1))/b > 0.\]   Since $S_{2,b}(c^-) = a - (b-1)^2 = \tilde a$, if $c^- \notin \mathscr S_{2,b}(\tilde{a})$, then each $d \in \mathscr S_{2,b}(\tilde{a})$ has fewer digits than $c^-$.  Letting $d^+$ be the number obtained by appending the digit $b-1$ to the end of $d$, $d^+ = db + (b-1)$, we have 
$S_{2,b}(d^+) = \tilde a + (b-1)^2 = a$.  But since $d$ has fewer digits than $c^-$, $d^+$ has fewer digits than $c$.  This is a contradiction, since $c\in \mathscr S_{2,b}(a)$.  Hence $c^- \in \mathscr S_{2,b}(\tilde{a})$.  

Conversely, given $d \in \mathscr S_{2,b}(\tilde a)$, if $d^+ \notin \mathscr S_{2,b}(a)$, then each $c \in \mathscr S_{2,b}(a)$ has fewer digits than $d^+$.  But then $c^-$ has fewer digits than $d$, a contradiction.  Hence $d^+ \in \mathscr S_{2,b}(a)$.  

Thus $c\in \mathscr S_{2,b}(a)$ if and only if $c^- \in \mathscr S_{2,b}(\tilde a)$
if and only if for some $w\in \mathscr S_{2,b}(a^\prime)$, $c^- = w b^{q-1} + (b-1)R_{0,b}(q-1)$ if and only if for some $w\in \mathscr S_{2,b}(a^\prime)$, $c = w b^q + (b-1)R_{0,b}(q)$.  Therefore, 
\[\mathscr S_{2,b}(a) = \left\{w b^q + (b-1)R_{0,b}(q) \vert w\in \mathscr S_{2,b}(a^\prime) \right\},
\]
as desired.
\end{proof}

As an example of Theorem~\ref{General S theorem}, consider $b=10$ and $a = 731$.  Then $q = \left\lfloor\frac{731-1}{9^2}\right\rfloor - 6 = 3$ and $a^\prime = 731 - 81\cdot 3 = 488$.  By Table~\ref{S(a) table},
$\mathscr S_{2,10}(488) = \{6889999\}$ and so, by the theorem, $\mathscr S_{2,10}(731) = 
\left\{w \cdot 10^3 + 9\cdot R_{0,10}(3) \vert w\in \mathscr S_{2,10}(488)\right\} = 
\{6889999999\}$.

More generally, Theorem~\ref{General S theorem} shows that other than the appended $(b-1)$'s, the sets $\mathscr{S}_{2,b}$ enter a cycle of length $(b-1)^2$. This observation leads to the following corollary.

\begin{corollary}\label{Period S}
Given $a\geq 1$, let $\mathscr{P}_{2,b}(a)$ denote the 
set of strings of digits (possibly including the empty string) obtained from $\mathscr{S}_{2,b}(a)$ by removing the trailing $(b-1)$'s from each element. 
The sequence of sets $\mathscr{P}_{2,b}(a)$  
indexed by $a> C_b(b-1)^2$ is periodic with period $(b-1)^2$.
\end{corollary}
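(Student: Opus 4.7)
The plan is to deduce the corollary directly from Theorem~\ref{General S theorem} by comparing $\mathscr{S}_{2,b}(a)$ and $\mathscr{S}_{2,b}(a+(b-1)^2)$ and observing that, after stripping trailing $(b-1)$'s, the two sets coincide.

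The first step is a short computation with the floor function. If $a$ has associated parameters $q = \lfloor(a-1)/(b-1)^2\rfloor - C_b$ and $a' = a - (b-1)^2 q$ as in Theorem~\ref{General S theorem}, then replacing $a$ by $a+(b-1)^2$ increments $\lfloor(a-1)/(b-1)^2\rfloor$ by exactly one, and so changes the associated parameters to $q+1$ and the same $a'$. That is, the residue $a'$ is invariant under $a \mapsto a+(b-1)^2$, while $q$ increases by one.

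Applying Theorem~\ref{General S theorem} to both $a$ and $a+(b-1)^2$ then gives
\[\mathscr{S}_{2,b}(a) = \{\, wb^q + (b-1)R_{0,b}(q) : w \in \mathscr{S}_{2,b}(a') \,\}\]
and
\[\mathscr{S}_{2,b}(a+(b-1)^2) = \{\, wb^{q+1} + (b-1)R_{0,b}(q+1) : w \in \mathscr{S}_{2,b}(a') \,\},\]
indexed by the \emph{same} set $\mathscr{S}_{2,b}(a')$. The base-$b$ representation of $wb^q + (b-1)R_{0,b}(q)$ is the base-$b$ representation of $w$ followed by $q$ copies of the digit $b-1$, and analogously the element of $\mathscr{S}_{2,b}(a+(b-1)^2)$ corresponding to $w$ is that representation followed by $q+1$ copies of $b-1$. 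Stripping all trailing $(b-1)$'s from either of these strings produces exactly the same string, namely the base-$b$ representation of $w$ with its own trailing $(b-1)$'s removed (possibly the empty string, if $w$ consists entirely of $(b-1)$'s). Hence each $w \in \mathscr{S}_{2,b}(a')$ contributes the same element to $\mathscr{P}_{2,b}(a)$ as to $\mathscr{P}_{2,b}(a+(b-1)^2)$, and therefore $\mathscr{P}_{2,b}(a+(b-1)^2) = \mathscr{P}_{2,b}(a)$.

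No step is delicate; Theorem~\ref{General S theorem} has already done the substantive work, and the corollary amounts to recognizing that the only $a$-dependence in that theorem, beyond the auxiliary set $\mathscr{S}_{2,b}(a')$, is the appended block of $(b-1)$'s, which is exactly what is erased in passing from $\mathscr{S}_{2,b}$ to $\mathscr{P}_{2,b}$. The only thing that must be written down carefully is the floor-function identity showing $a' \bmod (b-1)^2$ is preserved under $a \mapsto a+(b-1)^2$.
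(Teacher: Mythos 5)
Your proposal is correct and follows essentially the same route as the paper: both deduce the result directly from Theorem~\ref{General S theorem}, using that $a'$ depends only on $a \bmod (b-1)^2$ and that the only other $a$-dependence is the appended block of trailing $(b-1)$'s, which is erased in forming $\mathscr{P}_{2,b}$. The paper phrases this as $\mathscr{P}_{2,b}(a)=\mathscr{P}_{2,b}(a')$ rather than comparing $a$ with $a+(b-1)^2$ as you do, but the substance is identical.
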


\begin{proof}
Let $a> C_b(b-1)^2$. 
Using the notation of Theorem~\ref{General S theorem},
the elements of $\mathscr{S}_{2,b}(a)$ are precisely the elements of $\mathscr{S}_{2,b}(a')$ with $q$ $(b-1)$'s appended to each. 
Therefore, $\mathscr{P}_{2,b}(a)=\mathscr{P}_{2,b}(a')$. Since the value of $a'$ depends only on the congruence class of $a$ modulo $(b-1)^2$, 
the sequence of sets is periodic with period $(b-1)^2$, as desired.
\end{proof}

\section{Heights in Base \texorpdfstring{$10$}{10}}\label{heights10}

In this section, we restrict our focus to base 10, determining the values of $\varepsilon_{k,10}$ for each $k\leq 16$.  To simplify terminology and notation, we generally suppress the prefix or subscript ``10."  
(For example, we refer to a 10-basic number as being ``basic.")
Define two numbers to be \emph{equivalent} if their base 10 representations have the same leading digit and the digits of one is a permutation of the digits of the other.  Note in particular that two equivalent numbers have the same image under $E_2 = E_{2,10}$.

Cai and Zhou~\cite{cai08} proved that for $k \geq 7$, $S_2 = S_{2,10}$ maps $\sigma_{k}$, the least happy number of height $k$, to $\sigma_{k-1}$.  They used this result and bounds on the number of digits in $\sigma_k$ to develop a method for determining $\sigma_{k}$ from $\sigma_{k-1}$.  
 
For elated numbers, on the other hand, it appears that there does not exist a value such that for each larger $k$, $E_2$ maps $\varepsilon_{k}$ to $\varepsilon_{k-1}$.  
Instead, for a given height, we identify a small set of numbers into which the smallest number of the next larger height must map.  
We use this approach in each of the following proofs, the proofs growing more complex as the numbers involved get significantly larger. 

The values of  $\varepsilon_k$ for $0 \leq k \leq 12$ are given in Table~\ref{10heighttable}.  These were determined via a direct computer search, using the fact that for $k \geq 2$, $\varepsilon_{k}$ must be basic.  
\begin{table}[htb]
\begin{center}
\begin{tabular}{|c|c|} \hline
    Height $k$ & $\varepsilon_{k}$ \\ \hline
    0 & 1 \\  \hline
    1 & 10 \\ \hline
    2 & 13 \\ \hline
    3 & 51 \\ \hline
    4 & 67 \\ \hline
    5 & 97 \\ \hline
    6 & 668 \\ \hline
    7 & 77 \\ \hline
    8 & 746 \\ \hline
    9 & 92 \\ \hline
    10 & 717 \\ \hline
    11 & 5369 \\ \hline
    12 & 8888999999\\ \hline 
\end{tabular}
\caption{Smallest elated numbers of height $k$ for $0\leq k \leq 12$} 
\label{10heighttable}
\end{center}
\end{table}

To compute the values of $\varepsilon_k$ for higher values of $k$, we use the following corollary to Theorem~\ref{General S theorem}, noting that from
Table~\ref{Cb}, $C_{10}=6$.

\begin{corollary}\label{S theorem}
Fix $a > 486$.  Set
\[q = \left\lfloor\frac{a-1}{81}\right\rfloor  - 6 {\ \ \ and\ \ \ }
a^\prime = a - 81q.
\]
Then the elements of $\mathscr S_{2}(a)$ are precisely the elements of $\mathscr S_{2}(a^\prime)$ with $q$ 9's appended to each:
\[\mathscr S_{2}(a) = \left\{w \cdot 10^q + 9\cdot R_{0,10}(q) \vert w\in \mathscr S_{2}(a^\prime) \right\}.
\]
\end{corollary}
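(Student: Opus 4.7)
The plan is to derive this as a direct specialization of Theorem~\ref{General S theorem} to the base $b=10$. First I would substitute $b=10$ throughout the statement of that theorem: we have $(b-1)^2 = 81$, and Table~\ref{Cb} records $C_{10}=6$, so the threshold $C_b(b-1)^2$ equals $6 \cdot 81 = 486$. The hypothesis $a > 486$ of the corollary therefore matches the hypothesis $a > C_b(b-1)^2$ of the theorem exactly, and the definitions
\[
q = \left\lfloor \frac{a-1}{81}\right\rfloor - 6, \qquad a' = a - 81q
\]
are the specializations of the theorem's definitions of $q$ and $a'$.

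Next I would write out the conclusion of Theorem~\ref{General S theorem} with these values plugged in. The theorem states
\[
\mathscr S_{2,b}(a) = \bigl\{w b^q + (b-1) R_{0,b}(q) \bigm| w \in \mathscr S_{2,b}(a')\bigr\},
\]
and substituting $b = 10$ and $b-1 = 9$ yields
\[
\mathscr S_{2}(a) = \bigl\{w \cdot 10^q + 9 \cdot R_{0,10}(q) \bigm| w \in \mathscr S_{2}(a')\bigr\},
\]
which is precisely the corollary.

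The only genuinely non-trivial input is the entry $C_{10} = 6$ taken from Table~\ref{Cb}; everything else is mechanical. To justify this value I would note that, by Theorem~\ref{T:non9theorem2}, every element of $\mathscr S_{2,10}(a)$ has fewer than $2 \cdot 10 = 20$ non-$9$ digits, so the quantity $a^*_{10}$ (the largest $a$ such that some element of $\mathscr S_{2,10}(a)$ does not end in $9$) can be extracted from a finite search: enumerate fully $10$-basic numbers of minimal length whose last digit lies in $\{1,\dots,8\}$, apply $S_{2,10}$, and take the maximum value that is actually attained by a minimal-length preimage. This computation gives $a^*_{10} = 561$ and therefore $C_{10} = \lfloor 561/81 \rfloor = 6$. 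Since this verification is purely computational and the rest of the argument is a transparent substitution, there is no real obstacle in the proof of the corollary itself — the conceptual work has already been done inside Theorem~\ref{General S theorem}.
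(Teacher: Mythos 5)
Your proposal is correct and matches the paper's treatment: the corollary is obtained by specializing Theorem~\ref{General S theorem} to $b=10$, using $C_{10}=6$ from Table~\ref{Cb} (so the threshold is $6\cdot 81=486$), with the value of $C_{10}$ resting on the finite computation of $a^*_{10}=561$ that Theorem~\ref{T:non9theorem2} renders feasible. Your added justification of why that computation is a finite search is a nice touch but does not change the substance.
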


It follows that the sets $\mathscr S_{2}(a)$ for large values of $a$ are determined by those for $487 \leq a \leq 567$, which are easily found with a direct computer search.  We record some of these in Table~\ref{S(a) table}, including the values that are used in the following proofs.   

\begin{table}[hbt!]
\begin{center}
\begin{tabular}{|c|c|} \hline
$a$ & $\mathscr S_{2}(a)$ \\\hline
487 & 
\{1999999\}\\
\hline
488 & 
\{6889999\}\\
\hline
529 & 
\{88888889\}\\
\hline
534 & 
\{18899999,
47899999\}\\
\hline
543 & 
\{57899999\}\\
\hline
546 & 
\{88888899\}\\
\hline
549 & 
\{48899999\}\\
\hline
557 & 
\{378888999,
458889999,
466899999\}\\
\hline
561 & 
\{157999999,
368889999,
377799999,
555999999,
788888888\} \\
\hline
564 & 
\{188888999,
257999999,
478888999,
567799999\}\\
\hline
567 & 
\{9999999\}\\
\hline
\end{tabular}
\caption{Examples of $\mathscr S_{2}(a)$} 
\label{S(a) table}
\end{center}
\end{table}

Although we do not use these results, we note that it follows from Corollary~\ref{S theorem} and the computation of the values of $\mathscr S_{2}(a)$ for $486< a\leq 567$ that, for $k\geq 1$, the number of non-leading non-9 digits of $\varepsilon_{k}$ is strictly less than 8.  Further, if the number of these digits is 6 or 7, the digits form one of the strings $378888$, $188888$, $888888$, or $8888888$.

The remainder of this section is focused on providing, with proof, the values
of $\varepsilon_k$ for $13\leq k\leq 16$.  
We assume the use of a standard
computer algebra package for basic computations with relatively
small numbers, and explain the details of the calculations with larger numbers. 

A straightforward computer search yields the following lemma, which is used in proving 
Theorem~\ref{T:height13}.

\begin{lemma}\label{L:ht12-digits}
The only basic elated number of height $12$ less than $8\times 10^{10}$ is $\varepsilon_{12}=8888999999$.
\end{lemma}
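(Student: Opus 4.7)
The plan is to perform a constrained computer search over basic integers less than $8\times 10^{10}$, organized so that exhaustiveness is verifiable. First I would bound the search space: a basic integer less than $8\times 10^{10}$ has at most $11$ digits, and any $11$-digit basic integer less than $8\times 10^{10}$ must have leading digit at most $7$, since the smallest basic integer with $11$ digits and leading digit $8$ is $81111111111 > 8\times 10^{10}$. The number of $k$-digit basic integers with a fixed leading digit equals $\binom{k+7}{8}$, counting multisets of size $k-1$ drawn from $\{1,\dots,9\}$, so the total number of candidates is
\[\sum_{k=2}^{10} 9\binom{k+7}{8} + 7\binom{18}{8},\]
well below $10^6$ and easily enumerable.

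Next I would reduce the search using a bound on the image of $E_2$. If $n < 8\times 10^{10}$ is basic with leading digit $d$, then $E_2(n) = d\cdot S_2(n)$ with $S_2(n) \le d^2 + 9\cdot 81$ in the at-most-$10$-digit case and $S_2(n) \le d^2 + 10\cdot 81$ with $d \le 7$ in the $11$-digit case. In either case $E_2(n) \le 7290$. I would precompute the iterated $E_2$-trajectory of every integer in $[1,7290]$ and identify the subset $H$ of those with height exactly $11$; the basic height-$12$ elated numbers below $8\times 10^{10}$ are exactly the basic preimages under $E_2$ of $H$ that lie below $8\times 10^{10}$.

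For each $m \in H$ and each digit $d\in \{1,\dots,9\}$ with $d\mid m$, I would enumerate multisets from $\{1,\dots,9\}$ whose squares sum to $m/d - d^2$; each such multiset yields a unique basic integer (leading digit $d$ followed by the multiset in nondecreasing order), and I would retain those below $8\times 10^{10}$. Collecting over all admissible pairs $(m,d)$ produces the complete list, which one verifies to be $\{8888999999\}$. As a final sanity check the trajectory
\[8888999999 \mapsto 5936 \mapsto 755 \mapsto 693 \mapsto 756 \mapsto 770 \mapsto 686 \mapsto 816 \mapsto 808 \mapsto 1024 \mapsto 21 \mapsto 10 \mapsto 1\]
has length exactly $12$.

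The main obstacle is bookkeeping rather than mathematics: I must ensure the multiset enumeration is exhaustive for every target sum of squares and that the digit-count bound is correctly applied in the $11$-digit case. Conceptually no step is difficult; the argument reduces to a finite, modest-sized computation.
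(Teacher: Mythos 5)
Your proposal is correct and takes essentially the same route as the paper, which establishes this lemma by simply asserting that "a straightforward computer search" yields it. You have merely made that search explicit and verifiable (bounding the candidate set of basic numbers, bounding $E_2$ by $7290$, and enumerating preimages of height-$11$ values), and your verification of the $12$-step trajectory of $8888999999$ checks out.
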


\begin{theorem}\label{T:height13}
The smallest elated number of height $13$ is \[\varepsilon_{13} = 8158 \times 10^{13888887}-1,\]
and the other basic elated numbers of height $13$ less than $8\times 10^{13888891}$ are 
\begin{align*}
     8368890000\times 10^{13888881}-1, \hspace{1.2cm} & 8377800000\times 10^{13888881}-1, \\  8556000000\times 10^{13888881}-1, 
     \mbox {\ \,   and\ \ \ } & 8788888889\times 10^{13888881}-1.
\end{align*}
\end{theorem}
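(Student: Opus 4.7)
The plan is to first verify that the candidate $A := 8158 \cdot 10^{13888887}-1$ is an elated number of height $13$, and then to pin down $\varepsilon_{13}$ exactly by analyzing the image $b := E_2(\varepsilon_{13})$. For the first step, $A$ has digits $8,1,5,7$ followed by $13888887$ nines, so a direct computation gives $E_2(A) = 8(64+1+25+49+13888887 \cdot 81) = 8999999888$. The image $8999999888$ has leading digit $8$ and digit multiset $\{8,8,8,8,9,9,9,9,9,9\}$ identical to that of $\varepsilon_{12} = 8888999999$, so $E_2(8999999888) = E_2(\varepsilon_{12}) = 5936$, showing that $8999999888$ has height $12$ and $A$ has height $13$. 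This forces $\varepsilon_{13} \leq A$, so $\varepsilon_{13}$ has at most $13888891$ digits.

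Next, set $b := E_2(\varepsilon_{13})$, so $b \leq 9 \cdot 81 \cdot 13888891 < 2 \cdot 10^{10}$. I would show $b$ is equivalent to $\varepsilon_{12}$ by letting $b'$ denote $b$ with any zero digits removed and $\beta'$ the basic integer obtained by keeping the leading digit of $b'$ and sorting the remaining digits in nondecreasing order. Then $E_2(\beta') = E_2(b)$, so $\beta'$ is a basic elated number of height $12$ with $\beta' \leq b' \leq b < 8 \cdot 10^{10}$, and Lemma~\ref{L:ht12-digits} forces $\beta' = \varepsilon_{12}$. Hence the nonzero digits of $b$ are four $8$s and six $9$s with leading $8$, and the bound $b < 2 \cdot 10^{10}$ precludes any zero digits (since any $11$-digit integer with leading $8$ is at least $8 \cdot 10^{10}$). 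Thus $b$ is a ten-digit number equivalent to $\varepsilon_{12}$; in particular its digit sum is $86$ and its last digit is $8$ or $9$, so $b$ is coprime to $15$.

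Let $d$ be the leading digit of $\varepsilon_{13}$, so $d \mid b$ and $d \in \{1,2,4,7,8\}$ with $S_2(\varepsilon_{13}) = b/d$. Any basic integer with leading digit $d$ and sum-of-squares value $b/d$ has at least $\lceil (b/d - d^2)/81 \rceil + 1$ digits; using $b \geq 8888999999$ one checks this lower bound exceeds $13888891$ for each $d \in \{1,2,4,7\}$ (for example, $d = 7$ forces at least $15677250$ digits), contradicting $\varepsilon_{13} \leq A$. Therefore $d = 8$. Then $8 \mid b$ restricts the last three digits of $b$ to a multiple of $8$ drawn from $\{8,9\}$, and among $888, 889, 898, 899, 988, 989, 998, 999$ only $888$ qualifies. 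The multiset constraint then forces positions $2$ through $7$ of $b$ to all equal $9$, yielding uniquely $b = 8999999888$.

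Finally, $S_2(\varepsilon_{13}) = 1124999986$, so the non-leading digits of $\varepsilon_{13}$ form a minimum-length element of $\mathscr S_2(1124999922)$. Applying Corollary~\ref{S theorem} (with $q = 13888881$ and $a' = 561$) shows that this set consists of the five elements of $\mathscr S_2(561)$ from Table~\ref{S(a) table}, each with $13888881$ nines appended. Prepending the leading digit $8$ to each yields exactly the five basic numbers listed in the theorem, with the minimum corresponding to $w = 157999999 \in \mathscr S_2(561)$ and equal to $8158 \cdot 10^{13888887}-1$. The digit-count bounds above also rule out additional basic height-$13$ elated numbers below $8 \cdot 10^{13888891}$: any with $d \ne 8$, or with $d = 8$ but $b \geq 10^{10}$, has digit count vastly exceeding $13888892$ and leading digit at most $9$, and so lies far above $8 \cdot 10^{13888891}$. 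The main obstacle I anticipate is executing the digit-count lower bounds cleanly for each excluded leading digit and carefully handling the zero-digit edge case in the analysis of $b$.
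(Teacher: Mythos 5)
Your proof is correct and follows essentially the same route as the paper's: bound $E_2(x)$ so that Lemma~\ref{L:ht12-digits} identifies $E_2(x)$ as equivalent to $\varepsilon_{12}$, use divisibility by the leading digit to force $E_2(x)=8999999888$, and then apply Corollary~\ref{S theorem} with Table~\ref{S(a) table} to $a=1124999922$. The only differences are cosmetic: the paper excludes leading digits at most $7$ in one stroke via $E_2(x)\leq 7(7^2+13888891\cdot 9^2)<\varepsilon_{12}$, where you instead combine divisibility (eliminating $3,5,6,9$) with digit-count lower bounds (eliminating $1,2,4,7$), and you make explicit the small point, left implicit in the paper, that $E_2(x)$ can contain no zero digits.
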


\begin{proof}
A direct computation shows that each of these is an elated number of height 13.  To see that the list is complete, let $x < 8\times 10^{13888891}$ be a basic elated number of height 13.  Suppose the leading digit of $x$ is less than $8$.  Then
\[
E_2(x)\leq 7(7^2+13888891\cdot 9^2) 
< \varepsilon_{12},
\]
a contradiction since $E_2(x)$ is of height 12 and 
$\varepsilon_{12}$ is minimal of height 12.  Hence the leading digit of $x$ is 8 or 9 and therefore $x$ has at most $13888891$ digits.
Thus,
\[
E_2(x) \leq  9(9^2+ 13888890\cdot 9^2) 
<8\times 10^{10},
\] 
and so by Lemma~\ref{L:ht12-digits},
$E_2(x)$ is equivalent to $\varepsilon_{12} = 8888999999$. Since the leading digit of $x$ divides $E_{2}(x)$ and none of the numbers equivalent to $8888999999$ is a multiple of 9,
the leading digit of $x$ is 8.
Therefore
$E_2(x) = 8999999888$, the only multiple of 8 equivalent to $\varepsilon_{12}$.  
It follows that the number formed by removing the leading digit of $x$ is mapped by $S_{2}$ to $E_2(x)/8 - 8^2 = 1124999922$.

Applying Corollary~\ref{S theorem} to $a=1124999922$, then using
Table~\ref{S(a) table}, we have 
$q = 13888881$ and
\begin{align*}
\mathscr{S}_{2}(a^\prime) &= \mathscr{S}_{2}(561) \\&=\{157999999,
368889999,
377799999,
555999999,
788888888\}.
\end{align*}
Thus, as desired, $x$ is one of the following numbers:
\begin{align*}
    8157999999\overbrace{9\dots9}^{13888881} &= 
    8158000000 \times 10^{13888881}-1,\\
    8368889999\overbrace{9\dots9}^{13888881} &= 8368890000\times 10^{13888881}-1,\\
    8377799999\overbrace{9\dots9}^{13888881} &= 8377800000\times 10^{13888881}-1,\\
    8555999999\overbrace{9\dots9}^{13888881} &= 8556000000\times 10^{13888881}-1, \text{ or }\\
    8788888888\overbrace{9\dots9}^{13888881} &= 8788888889\times 10^{13888881}-1,
\end{align*}
and the smallest of these is $\varepsilon_{13}$.
\end{proof}

\begin{theorem}\label{T:height14}
The smallest elated number of height $14$ is \[\varepsilon_{14} = 8579\times 10^{n_{14}}-1,
\]
where $n_{14} = ((837 \times 10^{13888888} -112)/8-8^2-138)/9^2$.  
Further, $\varepsilon_{14}$ is the only basic elated number of height $14$ less than $8\times 10^{n_{14}+4}$.
\end{theorem}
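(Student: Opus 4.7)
The plan follows the approach of the proof of Theorem~\ref{T:height13} with additional casework. First, a direct computation verifies that $\varepsilon_{14}=8579\times 10^{n_{14}}-1$ has height $14$: its base-$10$ digits are $8,5,7,8$ followed by $n_{14}$ nines, so
\[
E_2(\varepsilon_{14}) = 8\bigl(64+25+49+64+81n_{14}\bigr) = 8(202+81n_{14}) = 837\times 10^{13888888}-112,
\]
by the defining relation for $n_{14}$. Expanding, this is $836\underbrace{99\cdots 9}_{13888885}888$, which has leading digit $8$ and digit multiset $\{8^4,3,6,9^{13888885}\}$, hence is equivalent to the basic height-$13$ elated number $8368890000\times 10^{13888881}-1$ listed in Theorem~\ref{T:height13}. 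Therefore $\varepsilon_{14}$ has height $14$.

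For uniqueness, suppose $x<8\times 10^{n_{14}+4}$ is a basic elated number of height $14$ with leading digit $d$. If $d\leq 7$, then $x$ has at most $n_{14}+5$ digits, so $E_2(x)\leq 7(49+(n_{14}+4)\cdot 81)$; a direct numerical estimate using $n_{14}<837\cdot 10^{13888888}/648$ shows this bound is strictly less than $\varepsilon_{13}$, contradicting that $E_2(x)$ has height $13$. Hence $d\in\{8,9\}$ and $x$ has at most $n_{14}+4$ digits, so $E_2(x)<8\times 10^{13888891}$. By Theorem~\ref{T:height13}, $E_2(x)$ is equivalent to one of the five listed basic height-$13$ numbers. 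A digit-sum check shows that none of these five equivalence classes has digit sum divisible by $9$, ruling out $d=9$; thus $d=8$ and $E_2(x)$ is a multiple of $8$.

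The remaining task is to enumerate, in each equivalence class, which arrangements of three non-leading digits are divisible by $8$. Routine casework eliminates three of the five classes, leaving the classes with digit multisets $\{8^4,3,6,9^{13888885}\}$ and $\{8^9,7,9^{13888881}\}$. For each surviving multiple-of-$8$ value $N$, set $a=N/8-64$ and apply Corollary~\ref{S theorem}; since the non-leading part of $x$ has at most $n_{14}+3$ digits and elements of $\mathscr{S}_{2}(a')$ for $a'\in[487,567]$ have at least $7$ digits, the parameter $q=\lfloor(a-1)/81\rfloor-6$ must satisfy $q\leq n_{14}-4$, giving $a\leq 81n_{14}+243$. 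Combined with $N=8(a+64)$ and the identity $648n_{14}=837\cdot 10^{13888888}-1728$ (from the definition of $n_{14}$), this produces $N\leq 837\cdot 10^{13888888}+728$. The smallest multiple of $8$ in $\{8^9,7,9^{13888881}\}$ is $8788888\underbrace{99\cdots 9}_{13888881}888$, which exceeds this bound and so yields no valid candidate. In $\{8^4,3,6,9^{13888885}\}$, the bound forces $N$ to start with $836$; the last three digits, drawn from the remaining non-leading digits $\{8^3,9^{13888885}\}$, must then be $888$ (the unique triple from this multiset divisible by $8$), forcing the middle positions to be all nines. This gives the unique $N=836\underbrace{99\cdots 9}_{13888885}888$; the reduction under Corollary~\ref{S theorem} yields $a'=543$, Table~\ref{S(a) table} gives $\mathscr{S}_{2}(543)=\{57899999\}$, and the unique candidate is $x=\varepsilon_{14}$. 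The main obstacle is systematically executing the divisibility-by-$8$ casework across the five equivalence classes and verifying that the loose bound derived from $q\leq n_{14}-4$ is tight enough to eliminate every candidate $N$ other than $836\underbrace{99\cdots 9}_{13888885}888$.
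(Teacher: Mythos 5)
Your proof is correct and follows essentially the same approach as the paper: rule out leading digits below $8$ by bounding $E_2(x)$ against $\varepsilon_{13}$, use Theorem~\ref{T:height13} to place $E_2(x)$ in one of five equivalence classes, eliminate leading digit $9$ via digit sums, combine divisibility by $8$ with the bound $E_2(x)\leq 837\times 10^{13888888}+728$ to force $E_2(x)=836\overbrace{9\cdots9}^{13888885}888$, and finish with Corollary~\ref{S theorem} and $\mathscr{S}_2(543)=\{57899999\}$. The only differences are cosmetic: you apply the divisibility-by-$8$ filter before the size bound rather than after, and you derive that size bound via the parameter $q$ of Corollary~\ref{S theorem} instead of directly from the digit count, but both routes give the identical bound and conclusion.
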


\begin{proof}
It is straightforward to confirm
that $n_{14}\in \ZZ$.
Let $x$ be a basic elated number of height $14$ such that $x< 8\times 10^{n_{14}+4}$. 
If the leading digit of $x$ is less than $8$, then
\[E_2(x)\leq 7(7^2+(n_{14}+4)9^2) < 8 \times 10^{13888890}
<\varepsilon_{13},\]
which is impossible, since $\varepsilon_{13}$ is minimal of height 13. Thus, the leading digit of $x$ is 8 or 9 and therefore $x$ has at most ${n_{14}+4}$ digits.

Therefore,
\[E_2(x) \leq 9(9^2 + (n_{14}+3)9^2) < 8\times 10^{13888891}\]
and so 
$E_2(x)$ is equivalent to one of the numbers given in Theorem~\ref{T:height13}. None of these numbers is a multiple of 9, 
so the leading digit of $x$ is 8.
It follows that
\[E_2(x) \leq 8(8^2 + (n_{14}+3)9^2) =
728 + 837\times 10^{13888888}.
\]

The fact that $E_2(x)$ is no more than this bound and is equivalent to one of
the numbers listed in 
Theorem~\ref{T:height13} implies that $E_2(x)$ is equivalent to $\varepsilon_{13}$ or $8368890000\times 10^{13888881}-1$. 
But $E_2(x)$ is a multiple of 8 and therefore cannot be equivalent to $\varepsilon_{13}$.  Further there is only one multiple of 8 that is both equivalent to $8368890000\times 10^{13888881}-1$ and no more than $728 + 837\times 10^{13888888}$.   Hence
\[E_2(x) = 836\overbrace{9\dots9}^{13888885}888 = 
837\times 10^{13888888} - 112.\]

Applying Corollary~\ref{S theorem} to $a=E_2(x)/8 - 8^2$, we have
\[q =\left\lfloor \frac{E_2(x)/8-8^2-1}{81}\right\rfloor-6= \left\lfloor n_{14}+\frac{137}{81} \right\rfloor-6 = n_{14}-5\]
and, using Table~\ref{S(a) table},
$\mathscr{S}_{2}(a^\prime) = \mathscr{S}_{2}(543)
= \{57899999\}$.  Thus 
\[x = 857899999\overbrace{9\dots9}^{n_{14}-5} = 
8579 \times 10^{n_{14}} - 1,
\]
completing the proof.
\end{proof}

\begin{theorem}\label{T:height15}
The smallest elated number of height $15$ is \[\varepsilon_{15} = 7489\times 10^{n_{15}}-1,
\]
where $n_{15} = (\varepsilon_{14}/7-7^2-144)/9^2$.  
Further, $\varepsilon_{15}$ is the only basic elated number of height $15$ less than $7\times 10^{n_{15}+4}$.
\end{theorem}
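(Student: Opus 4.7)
The plan is to follow the template of Theorems~\ref{T:height13} and~\ref{T:height14}. Let $x$ be a basic elated number of height $15$ with $x < 7 \times 10^{n_{15}+4}$; the strategy is to (i) constrain the number of digits of $x$; (ii) show $E_2(x)$ is equivalent to $\varepsilon_{14}$; (iii) force the leading digit $d$ of $x$ to equal $7$; (iv) pin down $E_2(x) = \varepsilon_{14}$; and (v) invoke Corollary~\ref{S theorem} to determine the non-leading portion of $x$. A convenient identity used throughout is $81\,n_{15} = \varepsilon_{14}/7 - 193$, which also confirms $n_{15}\in\ZZ^+$ given $7 \mid \varepsilon_{14}$ (visible from the formula for $n_{14}$).

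For step (i), if $d \leq 6$ then $x$ has at most $n_{15}+5$ digits and $E_2(x) \leq 6\bigl(36 + 81(n_{15}+4)\bigr) = 6\varepsilon_{14}/7 + 1002 < \varepsilon_{14}$, contradicting that $E_2(x)$ has height $14$. Hence $d \in \{7,8,9\}$ and $x$ has at most $n_{15}+4$ digits, yielding $E_2(x) \leq 9\bigl(81 + 81(n_{15}+3)\bigr) = 9\varepsilon_{14}/7 + 1179$, which is comfortably less than $8 \times 10^{n_{14}+4}$. Since the basic equivalent of $E_2(x)$ is a basic elated number of height $14$ not exceeding $E_2(x)$, Theorem~\ref{T:height14} identifies it as $\varepsilon_{14}$. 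In particular, $E_2(x)$ has leading digit $8$ and digit multiset matching that of $\varepsilon_{14} = 8578\underbrace{9\cdots 9}_{n_{14}}$.

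For step (iii), the digit sum of $\varepsilon_{14}$ is $28 + 9n_{14} \equiv 1 \pmod 9$, so no permutation is divisible by $9$ and $d \neq 9$. To rule out $d = 8$ I enumerate the eight three-element sub-multisets drawn from $\{5,7,8,9,\ldots,9\}$ (with $8$ appearing at most once, since the leading $8$ is already placed) and check via the reduction $\overline{abc} \equiv 4a + 2b + c \pmod 8$ that none of the resulting three-digit numbers is divisible by $8$; since divisibility by $8$ depends only on the last three digits, this excludes $d = 8$. Thus $d = 7$. Step (iv) then refines the bound to $E_2(x) \leq 7\bigl(49 + 81(n_{15}+3)\bigr) = \varepsilon_{14} + 693$. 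Any permutation of $\varepsilon_{14}$'s digits with leading $8$ that differs from $\varepsilon_{14}$ itself must displace a $5$, $7$, or $8$ from one of the four highest-order positions, which changes the value by at least $9 \cdot 10^{n_{14}-1} \gg 693$; hence $E_2(x) = \varepsilon_{14}$.

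For step (v), write $x = 7 \cdot 10^{n_{15}+3} + y$ with $y$ fully basic of length at most $n_{15}+3$; then $S_2(y) = \varepsilon_{14}/7 - 49 = 81\,n_{15} + 144$. Applying Corollary~\ref{S theorem} to this value gives $q = n_{15} - 5$ and $a' = 549$; Table~\ref{S(a) table} supplies $\mathscr{S}_2(549) = \{48899999\}$, so $\mathscr{S}_2(\varepsilon_{14}/7 - 49)$ contains the single element $488\underbrace{9\cdots 9}_{n_{15}}$, of length exactly $n_{15}+3$. This saturates the length budget, so $y$ is forced, giving $x = 7489 \cdot 10^{n_{15}} - 1$ uniquely and establishing both minimality and uniqueness. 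The main obstacles are the mod-$8$ enumeration in step (iii), which appears to admit no slick shortcut, and the narrow-window argument in step (iv): the slack of $693$ is minuscule compared to $\varepsilon_{14}$, so one must verify carefully that no rearrangement of $\varepsilon_{14}$'s digits sneaks into the interval $[\varepsilon_{14}, \varepsilon_{14} + 693]$.
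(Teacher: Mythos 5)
Your proposal follows essentially the same route as the paper's proof: bound the leading digit and digit count, use Theorem~\ref{T:height14} to identify the basic equivalent of $E_2(x)$ as $\varepsilon_{14}$, rule out leading digits $8$ and $9$ by divisibility, shrink the window to force $E_2(x)=\varepsilon_{14}$ exactly, and finish with Corollary~\ref{S theorem} and $\mathscr{S}_2(549)=\{48899999\}$. In fact you supply more detail than the paper at two points the paper merely asserts: the mod-$8$ check on the last three digits and the $9\cdot 10^{n_{14}-1}$ lower bound separating $\varepsilon_{14}$ from its nearest equivalent.

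The one real omission is the integrality of $n_{15}$. You write $81\,n_{15}=\varepsilon_{14}/7-193$ and say the divisibility is ``visible from the formula for $n_{14}$,'' but this needs two nontrivial verifications: that $7\mid\varepsilon_{14}$ and that $81\mid(\varepsilon_{14}/7-193)$. The paper establishes these by computing $n_{14}\equiv 2\pmod 6$ (giving $10^{n_{14}}\equiv 10^2$ mod $7$, hence $7\mid 8579\cdot 10^{n_{14}}-1$) and $n_{14}\equiv 26\pmod{54}$ (giving $\varepsilon_{14}\equiv 55\pmod{81}$, hence $\varepsilon_{14}/7\equiv 31\pmod{81}$ and $81\mid \varepsilon_{14}/7-193$). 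Since your step (v) computes $a'=549$ directly from $a=81n_{15}+144$, the whole endgame rests on this unproved integrality; you should carry out these congruence computations (or an equivalent) rather than assert them.
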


\begin{proof}
We first show that $n_{15} \in \ZZ$.  A direct calculation with a computer algebra package verifies that $n_{14} \equiv 2 \pmod 6$. Therefore, 
\[\varepsilon_{14} 
= 8579\times 10^{n_{14}}-1\equiv
8579 \times 10^{2} - 1 \equiv 0 \pmod 7,\]
implying that $\varepsilon_{14}/7 \in \ZZ.$
Similarly, $n_{14} \equiv 26 \pmod{54}$ and so, applying Euler's Theorem where $\varphi(81) = 54$, 
\[
\varepsilon_{14} \equiv
8579 \times 10^{26} - 1 \equiv 55 \pmod{81}.
\]
Since $7^{-1} \equiv 58 \pmod{81}$, we have $\varepsilon_{14}/7\equiv 55\cdot 58\equiv 31 \pmod{81}$.  Then 
\begin{equation}\label{almostfinalcongruence}
\varepsilon_{14}/7-7^2-144\equiv 0 \pmod{81} 
\end{equation}
implying that
$n_{15} \in \ZZ$, as desired.

Let $x$ be a basic elated number of height 15 such that $ x < 7 \times 10^{n_{15}+4}$.  
If the leading digit of $x$ is less than $7$, then since $x$ has at most $n_{15}+5$ digits, 
\begin{align*}
E_{2}(x)&\leq 6(6^2+(n_{15}+4)9^2)\\
    &= 2160 + 6(\varepsilon_{14}/7-7^2-144)\\
& = 1002 + 6\varepsilon_{14}/7 \\
&< \varepsilon_{14},
\end{align*}
a contradiction.
So the leading digit of $x$ is $7$ or greater and $x$ has at most $n_{15}+4$ digits.

Now
\begin{align*}
E_2(x) & \leq 
9(9^2 + (n_{15}+3)9^2) \\
& = 2916 + 9(\varepsilon_{14}/7 - 7^2 - 144) \\
&= 1179 + 9\varepsilon_{14}/7\\
&=1179+9(8579\cdot 10^{n_{14}}-1)/7 \\
& < 8 \times 10^{n_{14}+4}.
\end{align*}
Thus by 
Theorem~\ref{T:height14}, $E_2(x)$ must be equivalent to $\varepsilon_{14}$.  

Since no number equivalent to $\varepsilon_{14}$ is a multiple of either 8 or 9, the leading digit of $x$ is 7. 
Therefore, 
\begin{align*}
E_2(x)&\leq 7(7^2+(n_{15}+3)9^2)\\
&= 2044 + 7(\varepsilon_{14}/7-7^2-144)\\
& = 693 + \varepsilon_{14}.
\end{align*}
But the only number equivalent to $\varepsilon_{14}$ and at most this bound is $\varepsilon_{14}$ itself.  Thus $E_2(x) = \varepsilon_{14}$. 

Let $a=\varepsilon_{14}/7 - 7^2$. Then, using (\ref{almostfinalcongruence}), $a \equiv 144 \equiv 549\pmod{81}$.  Applying Corollary~\ref{S theorem} and using
Table~\ref{S(a) table}, we have $q = n_{15}-5$ and
$\mathscr{S}_{2}(a^\prime) = \mathscr{S}_{2}(549)
= \{48899999\}$.  Thus 
\[x = 748899999\overbrace{9\dots9}^{n_{15}-5} = 
7489 \times 10^{n_{15}}-1,
\]
completing the proof.
\end{proof}

\begin{theorem}\label{T:height16}
The smallest elated number of height $16$ is \[\varepsilon_{16} = 9189\times 10^{n_{16}}-1,
\]
where $n_{16} = (\varepsilon_{15}/9-9^2-129)/9^2$.  
\end{theorem}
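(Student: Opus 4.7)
The plan is to follow the structure of Theorems~\ref{T:height13}--\ref{T:height15}. First, I would verify that $n_{16}$ is a positive integer. Since $7489\equiv 10\equiv 1\pmod 9$, we have $\varepsilon_{15}=7489\cdot 10^{n_{15}}-1\equiv 0\pmod 9$. For the divisibility of $\varepsilon_{15}/9-9^2-129$ by $81$, I would compute $n_{15}$ modulo the multiplicative order of $10$ modulo $729$, propagating from the modular information already established in the proof of Theorem~\ref{T:height15}, and expect to find $\varepsilon_{15}/9\equiv 48\pmod{81}$. An iterative application of Lemma~\ref{linear} then verifies that $\varepsilon_{16}=9189\cdot 10^{n_{16}}-1$ is an elated number of height exactly $16$.

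For the minimality, let $x$ be a basic elated number of height $16$ with $x\leq\varepsilon_{16}$; then $x$ has at most $n_{16}+4$ digits. Writing $d$ for its leading digit, the digit-sum bound gives $E_2(x)\leq d(d^2+(n_{16}+3)\cdot 81)$. Using the identity $\varepsilon_{15}=729\,n_{16}+1890$ (from the defining formula for $n_{16}$), this right-hand side is strictly less than $\varepsilon_{15}$ for every $d\leq 8$, contradicting $E_2(x)$ being elated of height $15$. Hence $d=9$, and substituting yields
\[
\varepsilon_{15}\leq E_2(x)\leq 729(n_{16}+4)=\varepsilon_{15}+1026.
\]
By Theorem~\ref{T:height15}, $E_2(x)$ must be equivalent to $\varepsilon_{15}=7488\underbrace{9\cdots 9}_{n_{15}}$. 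The smallest rearrangement of these digits that strictly exceeds $\varepsilon_{15}$ is obtained by swapping the $8$ at position $n_{15}$ with the adjacent $9$, producing an increment of $9\cdot 10^{n_{15}-1}$, far larger than $1026$. Thus $E_2(x)=\varepsilon_{15}$.

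Setting $a=\varepsilon_{15}/9-81$, we have $a\equiv 48\pmod{81}$, so Corollary~\ref{S theorem} gives $a'=534$ and $q=n_{16}-5$. By Table~\ref{S(a) table}, $\mathscr S_2(534)=\{18899999,\,47899999\}$. Prepending the leading digit $9$ and appending $n_{16}-5$ nines to each element yields the two candidates $9189\cdot 10^{n_{16}}-1$ and $9479\cdot 10^{n_{16}}-1$ for $x$; the smaller is $\varepsilon_{16}$, completing the proof.

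The principal obstacle I anticipate is the nested modular arithmetic required to verify $n_{16}\in\ZZ$: since $\varepsilon_{15}$ is determined by $n_{15}$, which is in turn determined by $n_{14}$ and $\varepsilon_{14}$, tracking $\varepsilon_{15}\pmod{729}$ propagates back through the recursive chain, requiring iterated Euler-type reductions. By contrast, the leading-digit analysis here is unusually clean compared to Theorems~\ref{T:height13} and~\ref{T:height14} because the size bound immediately forces $d=9$ with no divisibility-based case analysis, and the tight interval around $\varepsilon_{15}$ admits no equivalent of $\varepsilon_{15}$ other than itself.
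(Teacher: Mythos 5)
Your proposal matches the paper's proof essentially step for step: the size bound forcing leading digit $9$, the window $\varepsilon_{15}\leq E_2(x)\leq \varepsilon_{15}+1026$ forcing $E_2(x)=\varepsilon_{15}$ via Theorem~\ref{T:height15}, and the reduction $a=\varepsilon_{15}/9-81\equiv 534\pmod{81}$ yielding the two candidates from $\mathscr S_2(534)$, of which the smaller is $\varepsilon_{16}$. The only piece you leave as a plan rather than execute is the congruence chain giving $\varepsilon_{15}/9\equiv 48\pmod{81}$ (hence $n_{16}\in\ZZ$), which the paper obtains by propagating $n_{14}\pmod{1458}$ through the moduli $3^8\cdot 7$, $3^8$, and $3^6$; the residue you predict is the correct one.
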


\begin{proof}
To see that $n_{16} \in \ZZ$, we use a computer algebra package to verify the following.  First, 
$10^{1458} \equiv 1 \pmod{3^8\cdot 7}$ and
$n_{14}\equiv 566 \pmod{1458}$ implying that
\[10^{n_{14}}  
\equiv 10^{566} \equiv 4447 \pmod{3^8\cdot 7}.\]
Thus 
\[\varepsilon_{14} = 8579 \times 10^{n_{14}} -1 
\equiv 8579 \cdot 4447 -1 \equiv 31402 \pmod{3^8\cdot 7}.\]
It follows that 
$\varepsilon_{14}/7 \equiv 4486 \pmod{3^8}$, and so
$81n_{15} = \varepsilon_{14}/7 - 7^2 - 144
\equiv 4293 \pmod{3^8}$.  Therefore
$n_{15}\equiv 53 \pmod{3^4}$. 
This combined with the fact that $10^{(3^4)} \equiv 1 \pmod{3^6}$ implies that 
\[\varepsilon_{15} = 7489\times 10^{n_{15}}-1
\equiv 7489 \times 10^{53} - 1 \equiv 432 \pmod{3^6},\]
and thus $\varepsilon_{15}/9 \equiv 48 \pmod{3^4}$.  So, finally, 
\begin{equation}\label{finalcongruence}
\varepsilon_{15}/9-9^2-129 \equiv 0 \pmod{81},
\end{equation}
and hence
$n_{16} \in \ZZ$, as desired.

Let $x$ be a basic elated number of height $16$ such that $x \leq 9189\times 10^{n_{16}}-1$.  
If the leading digit of $x$ is less than 9, then
\begin{align*}
E_2(x)&\leq 8(8^2+(n_{16}+3)9^2)\\
&= 2456 + 8(\varepsilon_{15}/9-9^2-129)\\
&= 776+8\varepsilon_{15}/9 
\\       
&<\varepsilon_{15},
\end{align*}
which is impossible. 
So the leading digit of $x$ is 9. 

Note that 
\begin{align*}
E_2(x)&\leq 9(9^2 + (n_{16}+3)9^2)\\
&=2916 + 9(\varepsilon_{15}/9-9^2-129)\\
& = 1026+ \varepsilon_{15} \\
& < 7\times 10^{n_{15}+4},
\end{align*}
and so, by Theorem~\ref{T:height15},
$E_2(x)$ is equivalent to $\varepsilon_{15}$.
Further, since $E_2(x) \leq \varepsilon_{15} + 1026$,
this implies
that, in fact, $E_2(x) = \varepsilon_{15}$. 

Let $a = \varepsilon_{15}/9 - 9^2$.  
Then, using (\ref{finalcongruence}), $a \equiv 129 \equiv 534\pmod{81}$.  
Applying Corollary~\ref{S theorem} and using
Table~\ref{S(a) table}, we have 
$\mathscr{S}_{2}(a^\prime) = \mathscr{S}_{2}(534) 
= \{18899999,47899999\}$ and $q = n_{16}-5$.  
Thus, recalling that $x \leq 9189\times 10^{n_{16}}-1$, we have
\[x = 918899999\overbrace{9\dots9}^{n_{16}-5} = 
9189 \times 10^{n_{16}}-1,\]
completing the proof.
\end{proof}

\section{Open Problems}\label{openproblems}

We conclude with some open problems related to the work in this paper.  Most of these are derived from questions about happy numbers, some of which are solved in that context and others of which remain open.
\begin{itemize}
\item
Does there exist a base $b > 2$ for which every positive integer is $b$-elated?
\item
Is the number of cycles of $E_{2,b}$ unbounded as the base $b$ increases?
\item
What is the natural density of elated numbers, if it exists?
\item
Does there exist an $N\in \ZZ^+$ such that for each $k > N$, the digits of $\varepsilon_k$ are a permutation of the digits of $E_{2, 10}(\varepsilon_{k+1})$? 
More generally, does such a result hold for any base $b \geq 4$?
\item
Can the results in Section~\ref{heights10} be extended to a practical method for determining 
$\varepsilon_k$ for any $k\geq 17$? 
If so, can the method be extended to other bases?
\end{itemize}

As with the happy function, the elated function can be generalized to exponents other than 2.
For any $e \in \mathbb{Z}^+$, the \emph{$e$-power $b$-elated function}, $E_{e,b}:\mathbb{Z}^+\rightarrow \mathbb{Z}^+$ is defined by
\[
E_{e,b}(a) = E_{e,b}\!\left(\sum_{i=0}^n a_i b^i\right) = a_n\sum_{i=0}^n a_i^e,
\]
where $0\leq a_i <b$ and $a_n\neq 0$.
Any positive integer that maps to 1 under iteration of $E_{e,b}$ is called an \emph{$e$-power $b$-elated number}.  
\begin{itemize}
\item
Which results from this paper generalize directly to higher exponents and which do not?
\item 
What are the best generalizations concerning consecutive or $d$-consecutive sequences of $e$-power $b$-elated numbers for $e > 2$?
What about more generally for $u$-attracted numbers for $E_{e,b}$?
\item
What can be said about the numbers of trailing $(b-1)$'s of the minimal $e$-power $b$-elated numbers of given heights as the heights increase? 
\item
Do the non-leading, non-$(b-1)$ digits of the minimal $e$-power $b$-elated numbers of given heights form a cycle as the heights increase?
\end{itemize}

\section*{Acknowledgements}
The authors thank the organizers of the Research Experiences for Undergraduate Faculty (REUF) 2023. REUF is a program supported by the Institute for Computational and Experimental Research in Mathematics (ICERM) and the American Institute of Mathematics (AIM), and REUF 2023 was supported by NSF grant 2015462 to ICERM.

\end{document}